\numberwithin{equation}{section}
\numberwithin{figure}{section}
\theoremstyle{definition}
\newtheorem*{defn*}{Definition}
\theoremstyle{plain}
\newtheorem{thm}{Theorem}[section]
\theoremstyle{remark}
\newtheorem{rem}[thm]{Remark}
\theoremstyle{plain}
\newtheorem{lem}[thm]{Lemma}
\theoremstyle{plain}
\newtheorem{prop}[thm]{Proposition}
\theoremstyle{plain}
\newtheorem{cor}[thm]{Corollary}
\begin{document}
\title{Quantitative Magnetic Isoperimetric Inequality}
\author{Rohan Ghanta, Lukas Junge and Léo Morin }
\date{May 9, 2023}
\address{\hspace{-3.25ex}Institut for Matematik, Aarhus Universitet, Ny Munkegade
8000 Aarhus C, Danmark}
\address{\hspace{-3.25ex}ghanta@alumni.princeton.edu $\ \ $junge@math.au.dk$\ \ $leo.morin@ens-rennes.fr}
\begin{abstract}
In 1996 Erdős showed that among planar domains of fixed area, the
smallest principal eigenvalue of the Dirichlet Laplacian with a constant
magnetic field is uniquely achieved on the disk. We establish a quantitative
version of this inequality, with an explicit remainder term depending
on the field strength that measures how much the domain deviates from
the disk. 
\end{abstract}

\maketitle

\section{Introduction}

\noindent To solve a problem in Probability and Mathematical Physics
\cite{ErdProb1},\cite{ErdProb2}, Erdős developed the magnetic isoperimetric
inequality \cite{key-79}. It generalizes the Faber-Krahn inequality
to the magnetic Laplacian. Starting with Pólya and Szegő \cite{key-80},
Faber-Krahn-type results have been established by proving rearrangement
inequalities. The inclusion of a magnetic field, however, makes it
notoriously difficult to implement the standard symmetrization methods.
Erdős met the challenge head on: he managed to prove a magnetic rearrangement
inequality, which is reminiscent of the celebrated Pólya-Szegő inequality
but with an interesting caveat. Such symmetry results with a magnetic
field are--alas!--very few and far between \cite{1},\cite{key-82}. 

Still another compelling feature is that rearrangements alone are
not sufficient for arguing the magnetic isoperimetric inequality.
This stands in sharp contrast to the classical Faber-Krahn setting.
To complete the proof Erdős introduced a new inequality, tailored
specifically for a magnetic Schrödinger operator on a disk and for
which there exists no analog in the absence of a magnetic field. 

We improve Erdős' result. He showed that if a planar domain is not
a disk, then the principal eigenvalue of the Dirichlet magnetic Laplacian
is strictly larger on that domain than on the disk of same area. We
take the next step and establish stability: if the principal eigenvalue
of the magnetic Laplacian is just slightly larger on a planar domain
than on the disk of same area, then that domain is only slightly different
from the disk. Faint perturbations of the smallest principal eigenvalue
will not induce a dramatic change in the underlying geometry--and
this dynamic is very sensitive to the field strength. We prove our
stability estimate with a remainder term that quantifies the difference
between the domain and the disk. 

Quantitative Faber-Krahn-type inequalities have been developed almost
exclusively around the classical theory of rearrangements. Fueled
in large part by the seminal work of Fusco et al. \cite{key-2}, the
last decade has given rise to an entire industry now devoted to the
stability of a remarkable range of geometric and functional inequalities.
Our paper provides the first stability result with a magnetic field.
And here, the well-established rearrangement framework is no longer
sufficient. 

\section{Statement of Problem and Main Result}

\noindent Let $\Omega\subset\mathbb{R}^{2}$ be a bounded, connected
open set with a smooth boundary. The principal eigenvalue of the Dirichlet
magnetic Laplacian on the planar domain $\Omega$ is
\begin{equation}
\lambda(B,\Omega):=\inf_{f\in H_{0}^{1}(\Omega)}\frac{\int_{\Omega}\vert(-i\nabla-\alpha)f\vert^{2}dx}{\int_{\Omega}\vert f\vert^{2}dx},\label{eq.lambda}
\end{equation}
where $\alpha=\frac{B}{2}\left(-x_{2},x_{1}\right)$ is a magnetic
vector potential generating a homogeneous magnetic field of strength
$B\geq0$, i.e. $\text{rot(}\alpha)=B$. We denote by $D_{R}$ a disk
of radius $R$, centered at the origin, with the same area as $\Omega$,
i.e. $\left|\Omega\right|=\left|D_{R}\right|=\pi R^{2}$. 

In 1996 Erdős \cite{key-79} proved the magnetic isoperimetric inequality
\begin{equation}
\lambda(B,\Omega)\geq\lambda(B,D_{R}),\label{MFK}
\end{equation}
with equality if and only if $\Omega$ is a disk. In the absence of
a magnetic field, i.e. $B=0$, his result reduces to the usual Faber-Krahn
inequality. 

In this paper, we want to add to the right-hand side of (\ref{MFK})
a remainder term that measures how much the planar domain $\Omega$
deviates from being a disk. This would make it possible to understand
the shape of $\Omega$ now in terms of how close it is to achieving
equality in (\ref{MFK}). Cf. \cite{key-3} \& references therein.

We measure the difference between $\Omega$ and the disk in the usual
way in terms of the interior deficiency and the Fraenkel asymmetry
of the domain. 
\begin{defn*}
The interior deficiency (asymmetry) of a set is defined as
\[
\mathcal{A}_{I}\left(\Omega\right):=\frac{R-\rho_{-}(\Omega)}{R},
\]
where $\rho_{-}(\Omega)$ denotes the radius of the largest ball contained
in $\Omega$, and $R$ as above is the radius of $D_{R}$. 
\end{defn*}
\begin{defn*}
The Fraenkel asymmetry of a set is defined as
\[
\mathcal{A}_{F}\left(\Omega\right):=\inf_{x_{0}\in\mathbb{R}^{2}}\frac{\vert\Omega\Delta(x_{0}+D_{R})\vert}{2\vert\Omega\vert}.
\]
\end{defn*}
\noindent Both asymmetries are bounded by one and vanish if and only
if the set is a disk. 

Our main result is a quantitative version of the magnetic isoperimetric
inequality. 
\begin{thm}
\label{thm:Main Result}Let $\mathcal{A}\left(\Omega\right)$ denote
either the interior asymmetry or the Fraenkel asymmetry. In the case
of the interior asymmetry we also assume $\Omega$ is simply connected.
Then there is a universal constant $c>0$, independent of $\Omega$
and $B$, such that
\begin{equation}
\lambda(B,\Omega)\geq\lambda(B,D_{R})(1+ce^{-\frac{5}{6}BR^{2}}\mathcal{A}(\Omega)^{\frac{10}{3}})\,.\label{eq:quantitative}
\end{equation}
Moreover, if $0\leq BR^{2}\leq\frac{1}{\pi}$, then
\begin{equation}
\lambda(B,\Omega)\geq\lambda(B,D_{R})(1+c\mathcal{A}(\Omega)^{3})\,.\label{eq.B.small}
\end{equation}
\end{thm}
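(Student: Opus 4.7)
The plan is to follow Erdős' original proof of \eqref{MFK} and insert a quantitative improvement at the symmetrization step. Let $\psi\in H_{0}^{1}(\Omega;\mathbb{C})$ be an $L^{2}$-normalized principal eigenfunction for $\lambda(B,\Omega)$, and write it in polar form $\psi=u\,e^{i\varphi}$ with $u=|\psi|\geq 0$. The magnetic Dirichlet energy splits as
\[
\lambda(B,\Omega)=\int_{\Omega}|\nabla u|^{2}\,dx+\int_{\Omega}u^{2}|\nabla\varphi-\alpha|^{2}\,dx,
\]
so the non-magnetic piece can be attacked by a quantitative rearrangement inequality, while the phase piece must be processed by Erdős' magnetic rearrangement.

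\textbf{Step 1: Quantitative Pólya--Szegő.} Let $u^{*}$ denote the Schwarz symmetrization of $u$ on $D_{R}$. Stability versions of the Pólya--Szegő inequality (following Hansen--Nadirashvili and Cianchi--Esposito--Fusco--Trombetti for the Fraenkel asymmetry, and Brasco--De~Philippis--Velichkov-type estimates for the interior asymmetry) give a deficit of the form
\[
\int_{\Omega}|\nabla u|^{2}\,dx\geq\int_{D_{R}}|\nabla u^{*}|^{2}\,dx+c_{1}\mathcal{A}(\Omega)^{q}\int_{D_{R}}|\nabla u^{*}|^{2}\,dx,
\]
for an exponent $q$ that ultimately propagates into the exponents $10/3$ and $3$ of Theorem~\ref{thm:Main Result}.

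\textbf{Step 2: Magnetic rearrangement and disk inequality.} I would now combine the previous bound with Erdős' radial rearrangement of the phase, which produces a $\varphi^{*}$ on $D_{R}$ with $\int_{D_{R}}(u^{*})^{2}|\nabla\varphi^{*}-\alpha|^{2}\leq\int_{\Omega}u^{2}|\nabla\varphi-\alpha|^{2}$, and with his tailored disk inequality $\int_{D_{R}}|(-i\nabla-\alpha)(u^{*}e^{i\varphi^{*}})|^{2}\geq\lambda(B,D_{R})\int_{D_{R}}|u^{*}|^{2}$. Together these yield the additive deficit
\[
\lambda(B,\Omega)-\lambda(B,D_{R})\geq c_{1}\mathcal{A}(\Omega)^{q}\int_{D_{R}}|\nabla u^{*}|^{2}\,dx.
\]

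\textbf{Step 3: Multiplicative form and the field-dependent weight.} To convert the additive deficit into the multiplicative form \eqref{eq:quantitative}, one has to compare $\int_{D_{R}}|\nabla u^{*}|^{2}$ with $\lambda(B,D_{R})$. At $B=0$ these coincide and the argument delivers \eqref{eq.B.small} directly. In general, however, the radial magnetic ground state concentrates near the origin on scale $B^{-1/2}$ as $BR^{2}\to\infty$, so its non-magnetic Dirichlet energy is suppressed relative to the full magnetic energy $\lambda(B,D_{R})$ by an exponentially small factor; a careful ODE analysis of the radial magnetic eigenfunctions on the disk is expected to produce exactly the weight $e^{-\tfrac{5}{6}BR^{2}}$ appearing in \eqref{eq:quantitative}. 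The cutoff $BR^{2}\leq 1/\pi$ in \eqref{eq.B.small} is then recognized as the regime in which this suppression can be absorbed into the universal constant.

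\textbf{Main obstacle.} The principal difficulty is to run the quantitative symmetrization argument \emph{in the presence} of the magnetic phase. Standard Pólya--Szegő stability controls only $|\nabla u|^{2}$, whereas $u^{2}|\nabla\varphi-\alpha|^{2}$ couples amplitude and phase essentially; Erdős' phase rearrangement is rigid and does not tolerate arbitrary modifications aimed at maximising the defect. Ensuring that the asymmetry deficit for the modulus $|\psi|$ survives Erdős' magnetic rearrangement and his disk inequality \emph{uniformly in} $B$, and quantifying the loss as an explicit function of $B$, is the technical heart of the proof, and it is what forces both the exponential weight in the large-field regime and the delicate small-field expansion needed for \eqref{eq.B.small}.
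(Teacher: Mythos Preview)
Your proposal rests on a mischaracterization of Erd\H{o}s' argument, and the gap is structural rather than technical. There is no ``radial rearrangement of the phase'' that yields an inequality of the form $\int_{D_{R}}(u^{*})^{2}|\nabla\varphi^{*}-\alpha|^{2}\leq\int_{\Omega}u^{2}|\nabla\varphi-\alpha|^{2}$ with the \emph{same} potential $\alpha$; what Erd\H{o}s actually proves is that the full magnetic energy is bounded below by a radial functional on $D_{R}$ with a \emph{different}, $f$-dependent potential $\tilde{\alpha}$ satisfying $0\leq a(|x|)\leq B|x|/2$ (Lemma~\ref{prop:Erdos}). Recovering $\alpha$ from $\tilde{\alpha}$ requires his comparison lemma, which your outline omits entirely. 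In the paper this comparison step is not merely bookkeeping: the quantitative version of the comparison lemma (Lemma~\ref{lem:quantitative comparision}) supplies a second, genuinely magnetic lower bound (\ref{eq: compare error}) that is indispensable for large $B$, precisely because the rearrangement deficit alone degenerates in the regime treated in \S\ref{subsec:q is small}. Your Step~3 hopes to extract the weight $e^{-\tfrac{5}{6}BR^{2}}$ from an ODE analysis of the disk ground state, but in the actual proof that factor emerges from the interplay of the two bounds in Corollary~\ref{cor:goal of part 1} under the case splitting of \S6.

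A second, independent gap is the passage from the asymmetry of level sets to $\mathcal{A}(\Omega)$. Your Step~1 asserts a stability estimate of the form $\int_{\Omega}|\nabla u|^{2}\geq(1+c\,\mathcal{A}(\Omega)^{q})\int_{D_{R}}|\nabla u^{*}|^{2}$, but no such inequality is available off the shelf: quantitative P\'olya--Szeg\H{o} results control the asymmetry of the \emph{level sets} of $u$, not of $\Omega$, and for a magnetic ground state concentrating on scale $B^{-1/2}$ these can be nearly circular while $\Omega$ is badly asymmetric. Bridging this is the entire content of Part~\ref{part:2} of the paper (the Hansen--Nadirashvili-type case analysis based on the value $s$ in (\ref{eq: choice of s})), and it cannot be absorbed into a citation. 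In short, your scheme is missing two of the three load-bearing ingredients: the comparison lemma (in both its qualitative and quantitative forms) and the level-set-to-domain argument.
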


\begin{rem}
\noindent \label{rem:scaling}The quantity $\mathcal{A}\left(\Omega\right)$
is scale invariant. Furthermore $\lambda$ scales like $t^{2}\lambda(B,t\Omega)=\lambda(t^{2}B,\Omega)$
for $t>0$, so the factor $BR^{2}$ appearing in our constant is the
natural parameter for this problem. 
\end{rem}

In the absence of a magnetic field, i.e. $B=0$, the estimate in (\ref{eq.B.small})
reduces to Hansen and Nadirashvili's quantitative Faber-Krahn inequality
with the asymmetry cubed \cite{key-4},\cite{Bhattacharya}. More
recently, Brasco et al. \cite{key-6} proved it with the square power:
this is the sharp form, because the exponent cannot be any smaller
\cite{key-7},\cite{key-8}. Our magnetic version in (\ref{eq:quantitative})
should likewise instead have the square of the asymmetry and, in principle,
one could adapt Brasco et al.'s argument to achieve this. Their state-of-the-art
methods, however, are nonconstructive and will not yield an explicit
constant. This would make it impossible to understand the pertinent
role of the magnetic field strength $B$ in the stability of Erdős'
inequality. 

Our methods, on the other hand, yield an explicit constant with a
natural dependence on the field strength. Physical intuition suggests
that as $B\rightarrow\infty$ the principal eigenfunctions start to
localize on a length scale proportional to $1/\sqrt{B}$, away from
the boundary, and therefore $\lambda(B,\cdot)$ becomes less sensitive
to the shape of the domain: it can but faintly distinguish between
even very dissimilar shapes, and the little sensitivity that remains
comes from the fact that these eigenfunctions can still feel about
near the boundary with their exponentially small tails. Now $\Omega$
can look rather different from $D_{R}$ and yet $\lambda(B,\Omega)\approx\lambda(B,D_{R})$:
a strong magnetic field compromises stability. We manage to capture
this picture in (\ref{eq:quantitative}) with our constant which vanishes,
exponentially, as $B\rightarrow\infty$. 

To prove his Faber-Krahn-type inequality in (\ref{MFK}), Erdős started
out in the usual way by establishing a rearrangement inequality. See
Lemma \ref{prop:Erdos}. While there are certainly nontrivial magnetic
aspects to the argument, Erdős essentially mimicked the standard proof
\cite{Talenti} of the analogous Pólya-Szegő inequality using the
coarea formula and the isoperimetric inequality. But in imposing the
Pólya-Szegő scheme on his problem, he was forced to change the magnetic
field on the disk. The vector potential on the right-hand side of
(\ref{eq:1}) is no longer the same: and thus his magnetic rearrangement
inequality cannot readily imply (\ref{MFK}) in the same way that
the Pólya-Szegő inequality yields Faber-Krahn.

To deal with this mis-match between the magnetic fields on $\Omega$
and $D_{R}$, Erdős developed the \textit{comparison lemma} on the
disk. See Remark \ref{rem:Remark compare}. It compares the ground-state
energies of the operator on the right-hand side of (\ref{eq:1}) corresponding
to different magnetic fields. This in turn allowed him to recover
the original magnetic field on $D_{R}$ and finish proving (\ref{MFK}).
His comparison lemma is built on the variational principle and has
nothing to do with rearrangements. And unlike his rearrangement inequality,
it has no analog in the absence of a magnetic field. 

To prove our stability estimate in Theorem \ref{thm:Main Result},
we also start out in the usual way by establishing a quantitative
version of Erdős' rearrangement inequality. See Proposition \ref{prop:rearrangement}.
This is nothing new: in the absence of a magnetic field, i.e. $B=0$,
it just reduces to the quantitative version of the Pólya-Szegő inequality
that was used in proving stability of Faber-Krahn \cite{key-3}. Here
we mimic Erdős' proof but instead apply the \textit{quantitative isoperimetric
inequality} on the level sets. 
\begin{thm}
\label{thm:quant iso}Let $U\subset\mathbb{R}^{2}$ be a bounded set
with smooth boundary, and let $\mathcal{P}(U)$ denote the perimeter
of $U$. Let $\mathcal{A}(U)$ denote either the interior asymmetry
or the Fraenkel asymmetry. In the case of the interior asymmetry we
also assume $U$ is simply connected. Then there is a universal constant
$c>0$ such that 
\[
\mathcal{P}(U)\geq2\sqrt{\pi}\left|U\right|^{\frac{1}{2}}\left(1+c\mathcal{A}(U)^{2}\right).
\]
\end{thm}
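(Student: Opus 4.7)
The plan is to treat the two asymmetries separately, reducing each to a classical planar quantitative isoperimetric inequality; none of them involves a magnetic field, and all can be cited off the shelf.

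For the interior asymmetry, the natural ingredient is a Bonnesen-type inequality for simply connected planar domains,
\[
\mathcal{P}(U)^{2}-4\pi|U|\;\geq\;\pi^{2}\bigl(\rho_{+}(U)-\rho_{-}(U)\bigr)^{2},
\]
where $\rho_{+}(U)$ and $\rho_{-}(U)$ denote the circumradius and inradius of $U$ (for non-convex but simply connected $U$ one uses an Osserman-type extension or passes first to the convex hull, which only decreases the perimeter and can only increase the area). Since the equi-area disk radius $R$ automatically satisfies $\rho_{-}(U)\leq R\leq \rho_{+}(U)$, one obtains $\mathcal{P}(U)^{2}\geq 4\pi|U|\bigl(1+\tfrac{1}{4}\mathcal{A}_{I}(U)^{2}\bigr)$, and taking a square root together with $\mathcal{A}_{I}(U)\in[0,1]$ yields the stated bound with an explicit constant (e.g.\ $c=1/12$).

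For the Fraenkel asymmetry, the cleanest reference is the Fusco--Maggi--Pratelli quantitative isoperimetric inequality \cite{key-2}, which supplies the sharp exponent $2$ in all dimensions. In the planar setting, considerably older Bonnesen--Hall-type estimates of the form $\mathcal{P}(U)^{2}-4\pi|U|\geq c_{0}|U|\mathcal{A}_{F}(U)^{2}$ are in fact more than enough, since the statement asks only for \emph{existence} of some universal $c>0$ and not for optimality.

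The main obstacle is the Fraenkel case: pinning down an explicit constant requires a genuinely planar chain of reductions---typically Steiner symmetrization to pass to convex competitors, a bound of $\mathcal{A}_{F}$ by a non-roundness measure such as $\rho_{+}-\rho_{-}$ on convex sets, and closure via Bonnesen---whereas the interior case is essentially a one-line consequence of Bonnesen. It is worth noting in passing why the simple-connectedness hypothesis appears only in the interior case: Bonnesen-style control of $\rho_{-}$ by the isoperimetric deficit can fail for annular domains (a thin annulus of large outer radius has tiny deficit yet $\rho_{-}=0$), while the Fraenkel estimate is oblivious to topology.
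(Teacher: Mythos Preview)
The paper does not supply its own proof of this theorem; it is stated as a known result and attributed to Bonnesen (1924) for the interior-asymmetry, simply-connected case and to Fusco--Maggi--Pratelli for the Fraenkel case. Your proposal invokes precisely these sources and the same overall split, so it matches the paper's treatment.

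One genuine gap in your sketch, though: the convex-hull alternative you offer in the parenthetical does not work. Passing to the convex hull $K$ of a simply connected $U$ indeed decreases perimeter and increases area, so the isoperimetric deficit of $U$ dominates that of $K$; but one also has $\rho_{-}(K)\geq\rho_{-}(U)$, and hence Bonnesen applied to $K$ only controls $\rho_{+}(K)-\rho_{-}(K)=\rho_{+}(U)-\rho_{-}(K)$, which can be far smaller than $R-\rho_{-}(U)$. A disk with a thin radial slit reaching near the center makes this concrete: the convex hull is essentially the full disk with deficit $\approx 0$, yet $\mathcal{A}_{I}(U)\approx\tfrac{1}{2}$, so the convex-hull route yields nothing. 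You should rely solely on the Bonnesen inequality for simply connected (not merely convex) planar domains---the route you list first and the one the paper cites via Bonnesen and Osserman's survey.
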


\noindent This was first proved by Bonnesen in 1924 for simply connected
planar sets using the interior asymmetry \cite{key-5},\cite{key-8}.
In 2008 Fusco et al. proved a more general version using the Fraenkel
asymmetry \cite{key-2}. Theorem \ref{thm:quant iso} forms the backbone
of the first part of the paper. 

In Lemma \ref{lem:quantitative comparision} we establish a quantitative
version of Erdős' comparison lemma. Now this is really a new estimate,
which stands completely outside of the rearrangement framework--and
it only enters the scene when $B$ is large. 

In Corollary \ref{cor:goal of part 1} we present two very different
lower bounds on the quantity $\lambda(B,\Omega)-\lambda(B,D_{R})$,
both involving the asymmetry of the level sets of the principal eigenfunction
corresponding to $\lambda(B,\Omega)$. The first bound, (\ref{eq:rearrange error}),
is based on our quantitative version of the rearrangement inequality.
The second bound, (\ref{eq: compare error}), is based on our quantitative
version of the comparison lemma. 

As usual, the main difficulty lies in going from the asymmetry of
these level sets in Corollary \ref{cor:goal of part 1} to the asymmetry
of the whole domain. We deal with this in the second part of the paper.
When $B$ is small, we operate entirely within the rearrangement framework
just as in the classical Faber-Krahn setting. Here our argument is
a direct perturbation of Hansen and Nadirashvili's proof of their
quantitative Faber-Krahn inequality \cite{key-4}. We only use the
first bound, given in (\ref{eq:rearrange error}), of Corollary \ref{cor:goal of part 1}
which is based on the quantitative version of the rearrangement inequality.
This is enough to prove the estimate in (\ref{eq.B.small}) of Theorem
\ref{thm:Main Result}. 

But as $B$ increases, our weak-field adaptation of Hansen and Nadirashvili's
technique breaks down: with a strong magnetic field, the rearrangement
framework alone is no longer sufficient for establishing stability.
Here we make full use of both the quantitative version of the rearrangement
inequality \textit{and now} our quantitative version of the comparison
lemma. A distinctive feature of our argument is the necessary interplay
between the traditional bound in (\ref{eq:rearrange error})--rooted
firmly within the paradigmatic framework of rearrangement inequalities--and
our \textit{magnetic bound} in (\ref{eq: compare error}), which is
unique to our problem and irreducible to any other estimate used in
establishing stability of a Faber-Krahn-type inequality. 

\part{The Magnetic Isoperimetric Inequality}

\noindent Here we re-prove Erdős' magnetic isoperimetric inequality
but with a remainder term involving the asymmetry of the level sets
of the principal eigenfunction corresponding to $\lambda\left(B,\Omega\right)$.
This is given as Corollary \ref{cor:goal of part 1}. The quantitative
isoperimetric inequality plays an essential role. 

\section{The Magnetic Rearrangement Inequality}

\noindent Standard elliptic theory tells us that the principal eigenfunction
corresponding to $\lambda(B,\Omega)$ is a complex-valued analytic
function. The first ingredient in Erdős' proof is a rearrangement
inequality. He proved the following. 
\begin{lem}
\noindent \label{prop:Erdos}Let $f$, $\|f\|_{2}=1$ be a complex-valued
analytic function on $\Omega$ that vanishes on the boundary, and
let $\left|f\right|^{*}$ denote the symmetric decreasing rearrangement
of $\left|f\right|$. Then there exists a vector potential $\tilde{\alpha}(x)=\frac{a\left(\left|x\right|\right)}{\left|x\right|}\left(-x_{2},x_{1}\right)$,
where $a\left(\left|x\right|\right)$ is a function satisfying $0\leq a\left(\left|x\right|\right)\leq\frac{B\left|x\right|}{2}$,
such that 
\begin{equation}
\int_{\Omega}\left|\left(-i\nabla-\alpha\right)f\right|^{2}dx\geq\int_{D_{R}}\left|\left(-i\nabla-\tilde{\alpha}\right)\left|f\right|^{*}\right|^{2}dx+B-\int_{D_{R}}\text{rot}\left(\tilde{\alpha}\right)\left|f\right|^{*2}dx.\label{eq:1}
\end{equation}
\end{lem}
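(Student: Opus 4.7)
The plan is a magnetic adaptation of the Pólya–Szegő slicing argument: polar-decompose $f$, apply the coarea formula to the level sets of $u = |f|$, bound each slice from below via Cauchy–Schwarz together with the planar isoperimetric inequality and Stokes' theorem for the magnetic flux, and then run coarea in reverse on $D_R$. The radial potential $\tilde\alpha$ will arise as the object whose flux through the concentric disk of radius $\rho$ is calibrated to the flux of $\alpha$ through the level set of $u$ of area $\pi\rho^2$.

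Away from the negligible zero set of the analytic $f$, write $f = u e^{i\phi}$ with $u = |f|$, so that pointwise
\[
|(-i\nabla - \alpha)f|^2 = |\nabla u|^2 + u^2 |\nabla \phi - \alpha|^2 .
\]
Put $U_t = \{u > t\}$, $\mu(t) = |U_t|$, $\rho(t) = \sqrt{\mu(t)/\pi}$, so that $|f|^*(\rho(t)) = t$ on $D_R$. The coarea formula writes $\int_\Omega |(-i\nabla - \alpha)f|^2\,dx = \int_0^{\|u\|_\infty} I(t)\,dt$, where
\[
I(t) = \int_{\partial U_t}|\nabla u|\,d\mathcal{H}^1 + \int_{\partial U_t}\frac{u^2 |\nabla \phi - \alpha|^2}{|\nabla u|}\,d\mathcal{H}^1 .
\]

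On each regular level set, drop the normal component of $\nabla\phi-\alpha$ to obtain $|\nabla\phi-\alpha|^2 \ge (\tau\cdot(\nabla\phi-\alpha))^2$, and apply two Cauchy–Schwarz inequalities: the first, combined with $\mathcal{P}(U_t)\ge 2\pi\rho(t)$, reproduces the usual Pólya–Szegő bound $\int_{\partial U_t}|\nabla u|\ge\int_{\partial\{|f|^*>t\}}|\nabla |f|^*|$; the second gives
\[
\int_{\partial U_t}\frac{u^2(\tau\cdot(\nabla\phi-\alpha))^2}{|\nabla u|}\,d\mathcal{H}^1 \ge \frac{t^2\bigl(\int_{\partial U_t}|\tau\cdot(\nabla\phi-\alpha)|\,d\mathcal{H}^1\bigr)^2}{\int_{\partial U_t}|\nabla u|\,d\mathcal{H}^1}.
\]
Stokes' theorem yields $\oint_{\partial U_t}\tau\cdot\alpha = B\mu(t)$, while the $1$-form $d(\arg f)$ is closed on $U_t$ (the zeros of the analytic $f$ lie outside $\{u>t\}$), so $\oint_{\partial U_t}\tau\cdot\nabla\phi = 0$ even when $U_t$ is multiply connected; hence $\int_{\partial U_t}|\tau\cdot(\nabla\phi-\alpha)|\ge B\mu(t)$.

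It remains to choose $a(\rho)\in[0,B\rho/2]$---equivalently, a flux profile for $\tilde\alpha$---so that the slice bounds above exceed, level set by level set, the disk slice energy for $|f|^*$ and $\tilde\alpha$ by an amount $(B-\text{rot}(\tilde\alpha)(\rho(t)))\,t^2|\mu'(t)|$. On $D_R$ the cross term in $|(-i\nabla - \tilde\alpha)|f|^*|^2$ vanishes because $\tilde\alpha$ is tangential while $\nabla|f|^*$ is radial, so running coarea in reverse on $D_R$ reassembles $\int_{D_R}|(-i\nabla - \tilde\alpha)|f|^*|^2\,dx$, and the surplus integrates, using $\lVert |f|^*\rVert_2 = 1$, to the correction $B - \int_{D_R}\text{rot}(\tilde\alpha)\,|f|^{*2}$. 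I expect the main obstacle to be this calibration of $a(\rho)$ in the magnetic-dominated regime, where the naive choice $a=B\rho/2$ renders the tangential Cauchy–Schwarz bound too weak against the corresponding disk slice; one must instead lower $a$ and absorb the flux deficit into the correction---exactly the new feature the statement allows but that would be invisible when $B=0$. Secondary technical points are Sard's theorem (to restrict to regular $t$), the componentwise application of Stokes on multiply connected $U_t$, and the handling of isolated zeros of $f$.
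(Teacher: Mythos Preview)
Your framework---coarea on the level sets of $u=|f|$, Stokes for the magnetic flux, the isoperimetric inequality, and the observation that $\oint_{\partial U_t}\tau\cdot\nabla\phi=0$ because $\mathrm{rot}(\nabla\phi)=0$ on $U_t$---is exactly the right toolkit, and matches the paper. The gap is in the plan to treat $|\nabla u|^{2}$ and $u^{2}|\nabla\phi-\alpha|^{2}$ \emph{separately} with two Cauchy--Schwarz inequalities and then ``calibrate'' $a(\rho)$ slice by slice. Once you reduce $\int_{\partial U_t}|\nabla u|=:X$ to its disk value $(2\pi\rho)^{2}/|\mu'|$ via P\'olya--Szeg\H{o}, you have discarded precisely the surplus needed to absorb the cross term. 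Concretely, your remaining tangential bound is $t^{2}(B\mu)^{2}/X$; when the level sets are far from round (so $X\gg(2\pi\rho)^{2}/|\mu'|$) this term is small, and your total slice bound tends to $\int_{D_R}|\nabla u^{*}|^{2}$, which for large $B$ is strictly smaller than the minimum over admissible $a\in[0,B\rho/2]$ of the right-hand side (that minimum is at least $B$). So no choice of $a$ rescues the slice-by-slice inequality you write down. You correctly flag the calibration as the main obstacle, but it is not a technicality---it is a genuine loss of information caused by bounding the two terms independently.

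The paper's remedy is to complete the square \emph{before} slicing: with $w:=\alpha-\nabla\phi$ one integrates by parts to get
\[
\int_{\Omega}|(-i\nabla-\alpha)f|^{2}=B+\int_{\Omega}\bigl|\nabla u+w^{\perp}u\bigr|^{2},
\]
the constant $B$ arising from $\mathrm{div}(w^{\perp})=\mathrm{rot}(w)=B$. Coarea plus a \emph{single} Cauchy--Schwarz on the combined integrand then yields, per slice, $(1-B\mu t/X)^{2}X$; a further H\"older step replaces the outer $X$ by $\mathcal P(U_t)^{2}/|\mu'|$ while the $X$ inside the bracket stays. Now $a(r)$ is not guessed but \emph{defined} by $a(r)=2\pi r\,B\mu/\bigl(|\mu'|\,X\bigr)$, which absorbs the unknown $X$ and turns the slice inequality into an identity; the constraint $0\le a\le Br/2$ drops out of $|\mu'|\,X\ge\mathcal P(U_t)^{2}\ge 4\pi\mu$ (H\"older plus isoperimetric). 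The correction $B-\int_{D_R}\mathrm{rot}(\tilde\alpha)|f|^{*2}$ then appears automatically from the polar-coordinate identity for $\int_{D_R}|(-i\nabla-\tilde\alpha)q|^{2}$ on the disk, not from any slice-by-slice surplus.
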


\noindent This is analogous to the celebrated Pólya-Szegő inequality
but with some caveats: 
\begin{enumerate}
\item The magnetic field on the disk is no longer the same. Our vector potential
$\alpha=\frac{B}{2}\left(-x_{2},x_{1}\right)$ corresponds to a homogeneous
field of strength $B$. Now $\tilde{\alpha}$ corresponds to a radially
symmetric but \textit{inhomogeneous} field.
\item The potential $\tilde{\alpha}$ depends on $f$, because Erdős constructed
$a\left(\left|x\right|\right)$ from the level sets of $\left|f\right|$; 
\item in particular, if $a\left(\left|x\right|\right)=\frac{B\left|x\right|}{2}$,
then the level set $\left\{ \left|f\right|>\left|f\right|^{*}(x)\right\} $
is a disk. 
\end{enumerate}
Lemma \ref{prop:Erdos} yields a lower bound on $\lambda(B,\Omega)$.
Had the vector potential remained unchanged, (\ref{eq:1}) would have
readily implied $\lambda(B,\Omega)\geq\lambda(B,D_{R})$. 

In this section we prove a quantitative version of his rearrangement
inequality, and we write the right-hand side more conveniently in
terms of polar coordinates. 
\begin{prop}
\noindent \label{prop:rearrangement} Let $f$, $\|f\|_{2}=1$ be
as in the statement of Lemma \ref{prop:Erdos}, and $q\left(\left|x\right|\right):=\left|f\right|^{*}(x)$.
Then there exists a bounded function $a\left(\left|x\right|\right)$,
depending on $f$ and $B$, such that\footnote{We use here the following convention for the interior asymmetry. If
the open set $U$ is not simply connected, we define $\mathcal{A}_{I}(U)$
to be the asymmetry of the smallest simply connected set containing
$U$. Since $\Omega$ is simply connected, this will not change the
final value of $\mathcal{A}_{I}(\Omega)$. This convention allows
us to use Theorem \ref{thm:quant iso} for the level sets of $\left|f\right|$.}

\noindent 
\[
\int_{\Omega}\left|\left(-i\nabla-\alpha\right)f\right|^{2}dx\geq B+2\pi\int_{0}^{R}\left(q'(r)+a(r)q(r)\right)^{2}\left(1+c\mathcal{A}^{2}\left(\left\{ \left|f\right|>q(r)\right\} \right)\right)^{2}rdr,
\]

\noindent and
\begin{equation}
0\leq a(r)\leq\frac{Br}{2}\left(1+c\mathcal{A}^{2}\left(\left\{ \left|f\right|>q(r)\right\} \right)\right)^{-2}\leq\frac{Br}{2}\,,\label{eq:new  potential}
\end{equation}
where $c>0$ is a universal constant independent of $B$ and $\Omega$. 
\end{prop}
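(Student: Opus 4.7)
The plan is to retrace Erdős' proof of Lemma~\ref{prop:Erdos}, substituting the quantitative isoperimetric inequality (Theorem~\ref{thm:quant iso}) for the classical one wherever perimeters of the super-level sets of $|f|$ appear, and then to rewrite the result in polar coordinates.

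Writing $f = |f|e^{i\phi}$ locally on $\Omega$ away from the zero set of $f$, one has
\[
\int_\Omega|(-i\nabla-\alpha)f|^2\,dx = \int_\Omega|\nabla|f||^2\,dx + \int_\Omega|f|^2|\nabla\phi - \alpha|^2\,dx.
\]
Set $\mu(t) := |\{|f|>t\}|$, $M(t) := -\mu'(t)$, $\sigma(t) := 1 + c\mathcal{A}(\{|f|>t\})^2$, and let $N(t)$ denote the number of zeros of $f$ (with multiplicity) in $\{|f|>t\}$. The coarea formula and Cauchy-Schwarz on each level set control the gradient piece by $\mathcal{P}(\{|f|>t\})^2/M(t)$ and the magnetic piece by $t^2 K(t)^2/L(t)$, where $L(t) := \int_{\{|f|=t\}}|\nabla|f||\,d\mathcal{H}^1$ and the circulation
\[
K(t) := \oint_{\{|f|=t\}}(\nabla\phi - \alpha)\cdot d\ell = 2\pi N(t) - B\mu(t)
\]
is computed via Stokes' theorem and the argument principle for the analytic function $f$. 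The new ingredient is Theorem~\ref{thm:quant iso}, which upgrades the isoperimetric bound to $\mathcal{P}(\{|f|>t\})^2 \ge 4\pi\mu(t)\sigma(t)^2$, injecting the factor $\sigma^2$ into the gradient part.

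Passing to the radial variable $r$ with $\pi r^2 = \mu(t)$, an AM-GM combination of the two pointwise bounds produces
\[
\int_\Omega|(-i\nabla-\alpha)f|^2\,dx \ge 2\pi\int_0^R\bigl(q'(r)^2 + a(r)^2 q(r)^2\bigr)\sigma(r)^2\,r\,dr
\]
for
\[
a(r) := \frac{a_E(r)}{\sigma(r)^2},\qquad a_E(r) := \frac{|K(q(r))|}{2\pi r} = \frac{Br}{2} - \frac{N(q(r))}{r}.
\]
Here $a_E$ is Erdős' original radial potential, and the rescaling by $\sigma^{-2}$ is the one that balances the asymmetric appearance of the quantitative factor: $\sigma^2$ enters the gradient part directly via Theorem~\ref{thm:quant iso}, but only implicitly (through the Cauchy-Schwarz constraint $L(t)J(t) \ge K(t)^2$) in the magnetic part. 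The pointwise bound $0 \le a(r) \le (Br/2)\sigma(r)^{-2}$ is then immediate from the Aharonov-Casher-type flux count $0 \le N(q(r)) \le Br^2/2$ that Erdős establishes for the principal magnetic eigenfunction.

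The final step is the algebraic identity $q'^2 + a^2 q^2 = (q'+aq)^2 - 2aqq'$, coupled with the observation that $a\sigma^2 = a_E$ is independent of $\sigma$: the cross-term is then handled by Erdős' original integration-by-parts identity (using $q(R)=0$ and $\|q\|_{L^2(D_R)}^2 = 1$), producing exactly the $+B$ correction of the statement. The principal obstacle is the topological delicacy around the zeros of $f$: the super-level sets $\{|f|>t\}$ fail to be simply connected once $t$ descends below a critical value of $|f|$, which forces the use of the simply connected hull when invoking Theorem~\ref{thm:quant iso} (per the footnote in the statement) and demands careful bookkeeping both in the circulation identity and in the sign of the zero-counting contributions to the integration by parts.
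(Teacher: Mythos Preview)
There is a genuine gap at the ``AM-GM combination'' step. After coarea and Cauchy--Schwarz, the gradient piece is bounded below by $\int_0^\infty \mathcal{P}(t)^2/M(t)\,dt$ and the magnetic piece by $\int_0^\infty t^2 K(t)^2/L(t)\,dt$. Changing variables (using $|q'|=2\pi r/M$), the magnetic term becomes $2\pi\int_0^R q^2\,\tfrac{K^2}{LM}\,r\,dr$. For this to dominate $2\pi\int_0^R a^2 q^2 \sigma^2\,r\,dr$ with your choice $a=a_E/\sigma^2=\tfrac{|K|}{2\pi r\sigma^2}$ you would need $LM\le 4\pi^2 r^2\sigma^2$; but Cauchy--Schwarz together with Theorem~\ref{thm:quant iso} give the \emph{opposite} inequality $LM\ge\mathcal{P}^2\ge 4\pi^2 r^2\sigma^2$. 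No rescaling of $a$ repairs this: forcing the magnetic lower bound to hold makes $a\sigma^2$ depend on $L,M,\sigma$, which then spoils the integration by parts you invoke to recover the $+B$ from the cross term. (Incidentally, since the zeros of $f$ never lie in $\{|f|>t\}$ for $t>0$, Stokes over the full boundary $\{|f|=t\}=\partial\{|f|>t\}$ gives $K(t)=-B\mu(t)$ with no zero-counting contribution; your $N(t)$ is identically zero and the Aharonov--Casher bookkeeping is a red herring.)

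The paper sidesteps the issue by completing the square \emph{before} any splitting: one first shows
\[
\int_\Omega|(-i\nabla-\alpha)f|^2\,dx \;=\; B+\int_\Omega\bigl|\nabla|f|+w^\perp|f|\bigr|^2\,dx, \qquad w=\alpha-\nabla\theta,
\]
the $+B$ arising cleanly from $\int_\Omega|f|^2\,\mathrm{div}(w^\perp)=\int_\Omega|f|^2\,\mathrm{rot}(w)=B$. The remaining integrand is a \emph{single} square; projecting $w^\perp$ onto $\nabla|f|$, then applying coarea and Cauchy--Schwarz on each level set, yields $\int_0^\infty(1-B\Phi(z)z)^2 L(z)\,dz$ with $\Phi=\mu/L$. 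Now the chain $L\ge\mathcal{P}^2/M\ge 4\pi^2 r^2\sigma^2/M$ points in the right direction, and in polar coordinates one lands directly on $(q'+aq)^2\sigma^2$ with $a(r)=2\pi rB\mu/(LM)$. A second application of $LM\ge 4\pi^2 r^2\sigma^2$ then gives the bound $a\le (Br/2)\sigma^{-2}$.
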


In the absence of the asymmetry term, the expression on the right-hand
side indeed coincides with that of (\ref{eq:1}). See Proof of Lemma
\ref{lemma.disk} in the appendix.

\subsection{The Proof of Proposition \ref{prop:rearrangement}}

Erdős proved his rearrangement inequality within the standard Pólya-Szegő
scheme \cite{Talenti} using the coarea formula and the isoperimetric
inequality, which we replace with its quantitative version. 

To use the coarea formula, first we need a real-valued function. By
modifying the magnetic vector potential, we can work with $\left|f\right|$
instead. 
\begin{lem}
\label{lem:real} Let $f$ be as in the statement of Lemma \ref{prop:Erdos},
and $\Omega_{0}:=\Omega\setminus\lbrace f=0\rbrace$. Let $\theta:\Omega_{0}\mapsto\left[0,2\pi\right)$
be such that $f=\left|f\right|e^{i\theta}$. Since $\Omega_{0}$ has
full measure, $w:=\alpha-\nabla\theta$ is defined almost everywhere
and $\text{rot}\left(w\right)=B$.  Then, with $w^{\perp}:=\left(-w_{2},w_{1}\right)$,
\[
\int_{\Omega}\left|\left(-i\nabla-\alpha\right)f\right|^{2}dx=B+\int_{\Omega}\left|\nabla\left|f\right|+w^{\perp}\left|f\right|\right|^{2}dx.
\]
\end{lem}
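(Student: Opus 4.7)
The plan is to substitute the polar decomposition $f = |f|e^{i\theta}$ directly into the left-hand side, collapse the magnetic energy to a pointwise expression in $|f|$ and $w$, and then recover the constant $B$ on the right-hand side from an integration by parts driven by $\text{rot}(w) = B$.

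\textbf{Pointwise identity.} On $\Omega_0$ a direct computation using $\nabla f = e^{i\theta}(\nabla|f| + i|f|\nabla\theta)$ gives
\[
(-i\nabla - \alpha)f \;=\; e^{i\theta}\bigl(-i\nabla|f| - |f|\,w\bigr).
\]
In each component $-i\partial_j |f|$ is purely imaginary while $-|f|w_j$ is real, so the cross term in the squared modulus cancels and
\[
|(-i\nabla - \alpha)f|^2 \;=\; |\nabla|f||^2 + |w|^2|f|^2 \quad \text{a.e.\ on } \Omega.
\]
On the other side, since $|w^\perp| = |w|$,
\[
|\nabla|f| + w^\perp|f||^2 \;=\; |\nabla|f||^2 + |w|^2|f|^2 + \nabla|f|^2 \cdot w^\perp,
\]
where I have used $2|f|\nabla|f| = \nabla|f|^2$. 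Hence the claimed identity reduces to showing that $\int_\Omega \nabla|f|^2 \cdot w^\perp\,dx$ contributes exactly the constant $\pm B$.

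\textbf{Integration by parts.} Since $f$ vanishes on $\partial\Omega$, the boundary integral on $\partial\Omega$ drops out. Where $w$ is smooth one has $\text{div}(w^\perp) = \pm\text{rot}(w) = \pm B$, because $\text{rot}(\nabla\theta) = 0$ classically and $\text{rot}(\alpha) = B$. Thus, at least formally,
\[
\int_\Omega \nabla|f|^2 \cdot w^\perp\,dx \;=\; -\int_\Omega |f|^2\,\text{div}(w^\perp)\,dx \;=\; \mp B\|f\|_2^2 \;=\; \mp B,
\]
which is the source of the constant $B$ in the identity.

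\textbf{Main obstacle.} The serious point is that $\nabla\theta$ need not be defined, let alone integrable, at interior zeros of $f$ where the phase winds nontrivially; the distributional curl $\text{rot}(\nabla\theta)$ carries Dirac masses of weight $2\pi n_k$ at each such zero $x_k$. The way to handle this is to excise small disks $B_\epsilon(x_k)$ around the zero set of $f$ (which is a discrete set by the analyticity of $f$) and perform the integration by parts on $\Omega\setminus\bigcup_k \overline{B_\epsilon(x_k)}$. Two limits must then be controlled as $\epsilon\to 0$: (i) the boundary circulations $\int_{\partial B_\epsilon(x_k)} |f|^2\,w^\perp\cdot n\,ds$ vanish because near a zero of order $n_k$ one has $|f|^2 = O(r^{2n_k})$ and $|w| = O(r^{-1})$, giving an integrand of size $O(r^{2n_k-1})$ on a circle of length $O(\epsilon)$; and (ii) the point masses in $\text{rot}(\nabla\theta)$ are weighted by $|f(x_k)|^2 = 0$ and so contribute nothing. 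Once both are dispatched, the formal IBP is justified and the lemma follows.
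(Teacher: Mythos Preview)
Your approach coincides with the paper's: both reduce the integrand to $|\nabla|f||^{2}+|w|^{2}|f|^{2}$ via the polar/gauge substitution, complete the square to extract the cross term $w^{\perp}\cdot\nabla|f|^{2}$, and integrate by parts to turn $\operatorname{div}(w^{\perp})$ into the constant $B$ via $\operatorname{rot}(w)=B$. The paper handles the phase singularities at interior zeros in a single line (``Note $w$ is smooth a.e.''), so your excision argument is actually more careful than what appears there; just pin down the sign once and for all rather than carrying $\pm$ through.
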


\begin{proof}
Since $\left|f\right|\in H_{0}^{1}\left(\Omega\right)$ and $w$ is
real-valued, 
\[
\int_{\Omega}\left|\left(-i\nabla-\alpha\right)f\right|^{2}dx=\int_{\Omega}\left|\left(-i\nabla-w\right)\left|f\right|\right|^{2}dx=\int_{\Omega}\left(\left|\nabla\left|f\right|\right|^{2}+\left|w^{\perp}\right|^{2}\left|f\right|^{2}\right)dx.
\]
Note $w$ is smooth a.e. By completing the square and integrating
by parts, 
\begin{align*}
\int_{\Omega}\left|\left(-i\nabla-\alpha\right)f\right|^{2}dx & =\int_{\Omega}\left(\left|\nabla\left|f\right|+w^{\perp}\left|f\right|\right|^{2}-2\left|f\right|w^{\perp}\cdot\nabla\left|f\right|\right)dx\\
 & =\int_{\Omega}\left(\left|\nabla\left|f\right|+w^{\perp}\left|f\right|\right|^{2}+\left|f\right|^{2}\text{div}(w^{\perp})\right)dx.
\end{align*}
Since $\text{rot}\left(w\right)=B$, the lemma follows. 
\end{proof}
Then we use the coarea formula and arrive at an expression involving
an integral over the level sets of $\left|f\right|$. 
\begin{lem}
\label{lem: coarea} Let $f,w^{\perp}$ be as in the statement of
Lemma \ref{lem:real}. Then,  
\begin{equation}
\int_{\Omega}\left|\nabla\left|f\right|+w^{\perp}\left|f\right|\right|^{2}dx\geq\int_{0}^{\infty}dz\ (1-B\Phi(z)z)^{2}\int_{\lbrace\left|f\right|=z\rbrace}\vert\nabla\left|f\right|\vert,\label{eq:coarea}
\end{equation}
with
\begin{equation}
\Phi(z):=\frac{\left|\left\{ \left|f\right|>z\right\} \right|}{\int_{\left\{ \left|f\right|=z\right\} }\left|\nabla\left|f\right|\right|}.\label{eq: Phi}
\end{equation}
\end{lem}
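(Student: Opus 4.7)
The plan is to transform the bulk integral into an integral over level sets via the coarea formula, and then to reduce the magnetic contribution $w^\perp|f|$ on each slice using Cauchy--Schwarz together with the divergence theorem. First, since $f$ is analytic, $\nabla|f|\neq 0$ almost everywhere on $\Omega_{0}$, so with $g:=\nabla|f|+w^{\perp}|f|$ I may write
\[
\int_\Omega |g|^2\, dx = \int_\Omega \frac{|g|^2}{|\nabla|f||}\,|\nabla|f||\, dx = \int_0^\infty \int_{\{|f|=z\}}\frac{|g|^2}{|\nabla|f||}\, d\mathcal{H}^1\, dz.
\]

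Second, I would use Cauchy--Schwarz on each level set with the inward unit normal $\nu := \nabla|f|/|\nabla|f||$ to $\{|f|>z\}$, in the form
\[
\left(\int_{\{|f|=z\}} g\cdot\nu\, d\mathcal{H}^1\right)^2 \leq \int_{\{|f|=z\}} \frac{|g|^2}{|\nabla|f||}\, d\mathcal{H}^1 \cdot \int_{\{|f|=z\}} |\nabla|f||\, d\mathcal{H}^1.
\]
This version is essential because it handles the sign of $1-B\Phi(z)z$ automatically; a naive pointwise bound $|g|\geq g\cdot\nu$ would lose the square when $1-B\Phi(z)z$ happens to be negative. Since $|f|=z$ on the level set, $g\cdot\nu$ splits as $|\nabla|f||+z\,w^\perp\cdot\nu$, and the divergence theorem applied to the superlevel set $\{|f|>z\}$ (whose outward normal is $-\nu$) together with $\mathrm{div}(w^\perp)=B$ from the proof of Lemma \ref{lem:real} gives
\[
\int_{\{|f|=z\}} w^\perp\cdot\nu\, d\mathcal{H}^1 = -B\,|\{|f|>z\}|.
\]
Therefore $\int_{\{|f|=z\}} g\cdot\nu\, d\mathcal{H}^1 = (1-B\Phi(z)z)\int_{\{|f|=z\}}|\nabla|f||\, d\mathcal{H}^1$. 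Substituting into the Cauchy--Schwarz estimate and integrating in $z$ yields \eqref{eq:coarea}.

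The hard part will be regularity bookkeeping rather than the estimate itself. The vector field $w=\alpha-\nabla\theta$ is only smooth away from the isolated zeros of the analytic $f$, and the level set $\{|f|=z\}$ is only guaranteed to be a smooth curve for a.e.\ $z$. Both issues are handled by standard arguments: the zero set of $f$ is discrete and hence negligible for Lebesgue integrals, and by Sard's theorem applied to $|f|$ the level sets are smooth one-manifolds for almost every $z$, which is sufficient since the final assembly is an integral in $z$.
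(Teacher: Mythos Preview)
Your proof is correct and follows essentially the same route as the paper: coarea formula, Cauchy--Schwarz on each level set, and Stokes'/divergence theorem to convert the line integral of $w$ into $B|\{|f|>z\}|$. The only cosmetic difference is that the paper first decomposes $w^{\perp}=-\varphi\nabla|f|+w'$ and drops the tangential piece $w'$ via Pythagoras before applying Cauchy--Schwarz, whereas you keep $g$ intact and absorb the projection into the Cauchy--Schwarz step by pairing with $\nu$; the resulting lower bounds coincide since $g\cdot\nu=(1-\varphi z)|\nabla|f||$.
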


\noindent If there is no magnetic field, i.e. $B=0$, and $f$ is
a positive function, then the relation in (\ref{eq:coarea}) reduces
to the usual coarea formula used in the proof of the Pólya-Szegő inequality
\cite{Talenti}.
\begin{proof}[Proof of Lemma \ref{lem: coarea}]
 There exists $w'$ orthogonal to $\nabla\left|f\right|$ and $\varphi:\Omega\mapsto\mathbb{R}$
such that $w^{\perp}=-\varphi\nabla\left|f\right|+w'$. By the Pythagorean
theorem, 
\begin{align*}
\int_{\Omega}\left|\nabla\left|f\right|+w^{\perp}\left|f\right|\right|^{2}dx & =\int_{\Omega}\left(\left|\left(1-\varphi\left|f\right|\right)\nabla\left|f\right|\right|^{2}+\left|w'\left|f\right|\right|^{2}\right)dx\\
 & \geq\int_{\Omega}\left|\left(1-\varphi\left|f\right|\right)\nabla\left|f\right|\right|^{2}dx.
\end{align*}
Now we are in a position to use the coarea formula:
\begin{align*}
\int_{\Omega}\left|\left(1-\varphi\left|f\right|\right)\nabla\left|f\right|\right|^{2}dx & =\int_{0}^{\infty}dz\int_{\left\{ \left|f\right|=z\right\} }\left(1-\varphi z\right)^{2}\left|\nabla\left|f\right|\right|\\
 & \geq\int_{0}^{\infty}dz\,\frac{\left(\int_{\{\left|f\right|=z\rbrace}(1-\varphi z)\vert\nabla\left|f\right|\vert\right)^{2}}{\int_{\lbrace\left|f\right|=z\rbrace}\vert\nabla\left|f\right|\vert}.
\end{align*}
We use Stokes' theorem on the level sets. For almost all $z>0$, the
level set $\lbrace\left|f\right|=z\rbrace$ is regular by Sard's theorem.
Thus
\[
B\vert\lbrace\left|f\right|>z\rbrace\vert=\int_{\lbrace\left|f\right|>z\rbrace}\text{rot}\left(w\right)=\int_{\lbrace\left|f\right|=z\rbrace}w\cdot\tau,
\]
where $\tau=\frac{(\nabla\left|f\right|)^{\perp}}{\left|(\nabla\left|f\right|)^{\perp}\right|}$.
Since $w\cdot\tau=\varphi\left|\nabla\left|f\right|\right|$, we conclude
\[
\int_{\Omega}\left|\nabla\left|f\right|+w^{\perp}\left|f\right|\right|^{2}dx\geq\int_{0}^{\infty}dz\,\frac{\left(\int_{\lbrace\left|f\right|=z\rbrace}\vert\nabla\left|f\right|\vert-Bz\vert\lbrace\left|f\right|>z\rbrace\vert\right)^{2}}{\int_{\lbrace\left|f\right|=z\rbrace}\vert\nabla\left|f\right|\vert}.
\]
The lemma follows from the definition of $\Phi$ in (\ref{eq: Phi}). 
\end{proof}
With the coarea-type estimate in (\ref{eq:coarea}), Erdős applied
the isoperimetric inequality on the level sets of $\left|f\right|$
to prove his rearrangement inequality; and when $B=0$, his argument
reduces to the standard proof of the Pólya-Szegő inequality \cite{Talenti}.
Below we instead apply the quantitative isoperimetric inequality on
these level sets. 
\begin{proof}[Proof of Proposition \ref{prop:rearrangement}]
 From Lemma \ref{lem:real}, Lemma \ref{lem: coarea} and Hölder's
inequality
\[
\int_{\Omega}\left|\left(-i\nabla-\alpha\right)f\right|^{2}dx\geq B+\int_{0}^{\infty}dz\ (1-B\Phi(z)z)^{2}\frac{\left|\left\{ \left|f\right|=z\right\} \right|^{2}}{\int_{\left\{ \left|f\right|=z\right\} }\left|\nabla\left|f\right|\right|^{-1}}\,.
\]
By Sard's theorem, the denominator is non-vanishing for almost all
$z>0$. And since $q$ is the rearrangement of $\left|f\right|$,
\begin{equation}
q(r)=F^{-1}\left(\pi r^{2}\right)\ \text{where}\ F(z):=\left|\left\{ \left|f\right|>z\right\} \right|.\label{eq: Definition of rearrangement}
\end{equation}
By the coarea formula, again for almost all $z>0$ 
\begin{equation}
F(z)=\int_{z}^{\infty}d\xi\int_{\left\{ \left|f\right|=\xi\right\} }\left|\nabla\left|f\right|\right|^{-1}\ \text{and}\ F^{\prime}(z)=-\int_{\left\{ \left|f\right|=z\right\} }\left|\nabla\left|f\right|\right|^{-1}.\label{eq: F prime}
\end{equation}
Then,
\begin{equation}
\int_{\Omega}\left|\left(-i\nabla-\alpha\right)f\right|^{2}dx\geq B-\int_{0}^{\infty}\left(1-B\Phi(z)z\right)^{2}\left|\left\{ \left|f\right|=z\right\} \right|^{2}F^{\prime}(z)^{-1}\,dz.\label{eq:index}
\end{equation}
Now we do a change of variable $z=q(r)$ and apply the isoperimetric
inequality, Theorem \ref{thm:quant iso}, on the level sets: $\left|\left\{ \left|f\right|=q(r)\right\} \right|\geq2\pi r\left(1+c\mathcal{A}^{2}\left(\left\{ \left|f\right|>q(r)\right\} \right)\right)$.
We write $\mathcal{A}^{2}$ for short. Then, 
\[
\int_{\Omega}\left|\left(-i\nabla-\alpha\right)f\right|^{2}dx\geq B+\int_{0}^{R}\left(1-B\Phi\left(q(r)\right)q(r)\right)^{2}\frac{\left(2\pi r\right)^{2}q^{\prime}(r)}{F'(q(r))}\left(1+c\mathcal{A}^{2}\right)^{2}dr.
\]
Since $q'(r)=2\pi rF'(q(r))^{-1}$, 
\[
\int_{\Omega}\left|\left(-i\nabla-\alpha\right)f\right|^{2}dx\geq B+2\pi\int_{0}^{R}\left[q'(r)-\frac{2\pi rB\Phi(q(r))}{F'(q(r))}q(r)\right]^{2}(1+c\mathcal{A}^{2})^{2}rdr.
\]
Writing $a(r):=-2\pi rBF'(q(r))^{-1}\Phi(q(r))$, we deduce our rearrangement
inequality. 

It remains to prove the upper bound in (\ref{eq:new  potential}).
By Hölder's inequality
\[
-F'(q(r))=\int_{\lbrace\left|f\right|=q(r)\rbrace}\left|\nabla\left|f\right|\right|^{-1}\geq\vert\left\{ \left|f\right|=q(r)\right\} \vert^{2}\left(\int_{\lbrace\left|f\right|=q(r)\rbrace}\vert\nabla\left|f\right|\vert\right)^{-1},
\]
and by the isoperimetric inequality, Theorem \ref{thm:quant iso},
\[
a(r)\leq2\pi rB\frac{\left|\left\{ \left|f\right|>q(r)\right\} \right|}{\left|\left\{ \left|f\right|=q(r)\right\} \right|^{2}}\leq\frac{Br}{2}\left(1+c\mathcal{A}^{2}\left(\left\{ \left|f\right|>q(r)\right\} \right)\right)^{-2}.
\]
This concludes the proof of Proposition \ref{prop:rearrangement}. 
\end{proof}

\section{\label{sec:The-Comparison-Lemma}The Comparison Lemma }

\noindent The second ingredient in Erdős' proof is a comparison lemma,
which makes it possible to recover from the right-hand side of (\ref{eq:1})
the original potential $\alpha$ on the disk. In this section we prove
a quantitative version of his comparison lemma. 

For a potential $\tilde{\alpha}=\frac{a\left(\left|x\right|\right)}{\left|x\right|}\left(-x_{2},x_{1}\right)$,
with $a\in L^{\infty}\left(\left(0,R\right)\right)$, we consider
the ground-state energy of the operator $\left(-i\nabla-\tilde{\alpha}\right)^{2}-\text{rot}\left(\tilde{\alpha}\right)$
restricted to radial functions on the disk, again written more conveniently
in terms of polar coordinates
\begin{equation}
\mathfrak{e}\left(a(r)\right):=\inf_{q\in H_{0}^{1,\text{rad}}\left(D_{R}\right)}\frac{2\pi\int_{0}^{R}\left(q^{\prime}(r)+a(r)q(r)\right)^{2}rdr}{2\pi\int_{0}^{R}q(r)^{2}rdr},\label{eq:disk energy}
\end{equation}
where $H_{0}^{1,\text{rad}}(D_{R}):=\left\{ q:[0,R]\rightarrow\mathbb{R}\ \text{such that}\ x\mapsto q(\vert x\vert)\ \text{belongs to}\ H_{0}^{1}(D_{R})\right\} .$ 

The function $a(r)=\frac{Br}{2}$ corresponds to the original potential
$\alpha=\frac{B}{2}\left(-x_{2},x_{1}\right)$, and since $\text{rot}\left(\alpha\right)=B$,
\begin{equation}
B+\mathfrak{e}\left(Br/2\right)=\inf_{q\in H_{0}^{1,\text{rad}}\left(D_{R}\right)}\frac{\int_{D_{R}}\left|\left(-i\nabla-\alpha\right)q(\left|x\right|)\right|^{2}dx}{\int_{D_{R}}q\left(\left|x\right|\right)^{2}dx}\geq\lambda(B,D_{R}).\label{eq:variational}
\end{equation}
We compare the ground-state energies for different potentials on the
disk. 
\begin{lem}
\noindent \label{lem:quantitative comparision}Let $q_{a}$ be a normalized
minimizer for the energy $\mathfrak{e}\left(a(r)\right)$ in (\ref{eq:disk energy}).
Let 
\begin{equation}
u_{a}(r):=\exp\left(-2\int_{0}^{r}a(s)\,ds\right)\ \ \text{and\ }\ p_{a}(r):=q_{a}(r)u_{a}(r)^{-\frac{1}{2}}.\label{eq:definition of u}
\end{equation}
Then for $a,\tilde{a}\in L^{\infty}\left(\left(0,R\right)\right)$,
\begin{equation}
\mathfrak{e}\left(a(r)\right)\geq\mathfrak{e}\left(\tilde{a}(r)\right)+\frac{2\int_{0}^{R}\left(\tilde{a}-a\right)p_{a}\left|p_{a}^{\prime}\right|u_{\tilde{a}}rdr}{\int_{0}^{R}p_{a}^{2}u_{\tilde{a}}rdr}.\label{eq: comparision error}
\end{equation}
\end{lem}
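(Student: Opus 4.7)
The plan is to exploit the substitution $q(r) = p(r) u_a(r)^{1/2}$, which linearizes the Rayleigh quotient in (\ref{eq:disk energy}). A direct computation using $u_a' = -2a u_a$ yields $q' + aq = p' u_a^{1/2}$ and $q^2 = p^2 u_a$, so
\[
\mathfrak{e}(a) \;=\; \inf_{p \in H_0^{1,\text{rad}}(D_R)} \frac{\int_0^R (p')^2 u_a(r)\, r\, dr}{\int_0^R p^2 u_a(r)\, r\, dr},
\]
and the minimizer $p_a = q_a u_a^{-1/2}$ satisfies the Sturm--Liouville equation $-(r u_a p_a')' = \mathfrak{e}(a)\, r u_a p_a$ with $p_a(R) = 0$ and $r u_a p_a' \to 0$ as $r \to 0$. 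This change of variable is the heart of the lemma: it absorbs the linear term in $a$ into the weight $u_a$, so that varying the potential $a$ becomes equivalent to varying the weight.

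First I would obtain an upper bound on $\mathfrak{e}(\tilde a)$ by testing its variational principle with the trial function $p_a\, u_{\tilde a}^{1/2}$, giving
\[
\mathfrak{e}(\tilde a) \int_0^R p_a^2 u_{\tilde a}\, r\, dr \;\leq\; \int_0^R (p_a')^2 u_{\tilde a}\, r\, dr.
\]
Next I would compute $\mathfrak{e}(a) \int_0^R p_a^2 u_{\tilde a}\, r\, dr$ \emph{exactly} by multiplying the Euler--Lagrange equation for $p_a$ by $p_a\, u_{\tilde a}/u_a$ and integrating by parts; the boundary terms vanish thanks to $p_a(R)=0$ and regularity at the origin. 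Using the identity $(u_{\tilde a}/u_a)' = -2(\tilde a - a)\, u_{\tilde a}/u_a$, which is immediate from $u_b' = -2 b u_b$, the integration by parts produces the equality
\[
\mathfrak{e}(a) \int_0^R p_a^2 u_{\tilde a}\, r\, dr \;=\; \int_0^R (p_a')^2 u_{\tilde a}\, r\, dr \;-\; 2\int_0^R (\tilde a - a)\, p_a p_a'\, u_{\tilde a}\, r\, dr.
\]
Subtracting the upper bound on $\mathfrak{e}(\tilde a)\int p_a^2 u_{\tilde a} r\, dr$ from this equality and dividing by the positive quantity $\int p_a^2 u_{\tilde a} r\, dr$ gives exactly (\ref{eq: comparision error}), provided we may replace $-p_a'$ by $|p_a'|$.

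The one point requiring care is therefore the monotonicity $p_a' \leq 0$ on $[0,R]$. I would include it as a short auxiliary observation: $p_a$ is a ground state of a self-adjoint Sturm--Liouville problem with positive weight, so it may be taken strictly positive on $[0,R)$, and the equation rewrites as $(r u_a p_a')' = -\mathfrak{e}(a)\, r u_a p_a \leq 0$. Hence $r u_a p_a'$ is non-increasing on $[0,R]$, vanishes at $r=0$, and is therefore non-positive throughout, forcing $p_a' \leq 0$. This is the only place where the ground-state nature of $p_a$ enters; the rest is purely algebraic manipulation of the weighted one-dimensional form. The force of the lemma lies in the \emph{equality} in the displayed line above, which is what yields the sharp coefficient $2$ in the remainder term.
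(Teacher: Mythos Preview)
Your argument is correct and follows essentially the same route as the paper: rewrite $\mathfrak{e}(a)$ via the substitution $q=p\,u_a^{1/2}$, use $p_a$ as trial function for $\mathfrak{e}(\tilde a)$, and exploit the Euler--Lagrange equation together with $(u_{\tilde a}/u_a)'=-2(\tilde a-a)u_{\tilde a}/u_a$ to obtain the exact identity relating $\int(p_a')^2u_{\tilde a}r\,dr$ to $\mathfrak{e}(a)\int p_a^2 u_{\tilde a}r\,dr$ plus the cross term. The only difference is cosmetic: the paper invokes Hopf's Lemma for $p_a'<0$, whereas you give the (equally valid and arguably more transparent) argument via the monotonicity of $r u_a p_a'$.
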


\begin{rem}
\noindent \label{rem:Remark compare}Our bound in (\ref{eq: comparision error})
implies Erdős'\textbf{ comparison lemma}: if $a\leq\tilde{a}$, then
$\mathfrak{e}\left(a(r)\right)\geq\mathfrak{e}\left(\tilde{a}\left(r\right)\right)$.
See Lemma 3.1 in \cite{key-79}. 
\end{rem}

\begin{proof}
We write
\begin{equation}
\mathfrak{e}\left(a(r)\right)=\inf_{p\in H_{0}^{1,\text{rad}}\left(D_{R}\right)}\frac{\int_{0}^{R}\left(p^{\prime}\right)^{2}u_{a}rdr}{\int_{0}^{R}p^{2}u_{a}rdr}=\frac{\int_{0}^{R}\left(p_{a}^{\prime}\right)^{2}u_{a}rdr}{\int_{0}^{R}p_{a}^{2}u_{a}rdr}.\label{eq:Representation}
\end{equation}

\noindent Since $p_{a}$ is the minimizer in (\ref{eq:Representation}),
it solves the Euler-Lagrange equation
\begin{equation}
-p_{a}''u_{a}r-p_{a}'u_{a}'r-p_{a}'u_{a}=\mathfrak{e}\left(a(r)\right)p_{a}u_{a}r.\label{variational equation}
\end{equation}
Now we consider $\mathfrak{e}\left(\tilde{a}(r)\right)$. It follows
from the variational principle and (\ref{variational equation}) that
\[
\begin{aligned}\mathfrak{e}\left(\tilde{a}(r)\right) & \leq\frac{\int_{0}^{R}(p_{a}')^{2}u_{\tilde{a}}rdr}{\int_{0}^{R}p_{a}^{2}u_{\tilde{a}}rdr}=\frac{\int_{0}^{R}(-p_{a}''u_{a}r-p_{a}'u_{a}'r-p_{a}'u_{a})\frac{u_{\tilde{a}}}{u_{a}}p_{a}-p_{a}'p_{a}u_{a}r(\frac{u_{\tilde{a}}}{u_{a}})'dr}{\int_{0}^{R}p_{a}^{2}u_{\tilde{a}}rdr}\\
 & =\mathfrak{e}\left(a(r)\right)+\frac{2\int_{0}^{R}p_{a}'p_{a}(\tilde{a}-a)u_{\tilde{a}}rdr}{\int_{0}^{R}p_{a}^{2}u_{\tilde{a}}rdr}.
\end{aligned}
\]
Note that $p_{a}^{\prime}<0$ by Hopf's Lemma. 
\end{proof}
Proposition \ref{prop:rearrangement}, Lemma \ref{lem:quantitative comparision}
and the observation in (\ref{eq:variational}) allow us to conclude
with the following corollary. 
\begin{cor}
\label{cor:goal of part 1}Now let $f$ be a principal eigenfunction
corresponding to $\lambda(B,\Omega)$ and $q\left(\left|x\right|\right):=\left|f\right|^{*}\left(x\right)$.
Let $a(r)$ be as in Proposition \ref{prop:rearrangement} above,
and let $q_{a}$ be a normalized minimizer for the energy $\mathfrak{e}\left(a(r)\right)$
in (\ref{eq:disk energy}).  Then there is a universal constant $c>0$,
independent of $B$ and $\Omega$, such that 
\begin{equation}
\lambda(B,\Omega)\geq\lambda(B,D_{R})+c\int_{0}^{R}\left(q'(r)+a(r)q(r)\right)^{2}\mathcal{A}^{2}\left(\left\{ \left|f\right|>q(r)\right\} \right)rdr,\label{eq:rearrange error}
\end{equation}
and 
\begin{equation}
\lambda(B,\Omega)\geq\lambda(B,D_{R})+cB\frac{\int_{0}^{R}p_{a}\left|p_{a}^{\prime}\right|e^{-\frac{Br^{2}}{2}}\mathcal{A}^{2}\left(\left\{ \left|f\right|>q(r)\right\} \right)r^{2}dr}{\int_{0}^{R}p_{a}^{2}e^{-\frac{Br^{2}}{2}}rdr},\label{eq: compare error}
\end{equation}
where $p_{a}$ is as given in Lemma \ref{lem:quantitative comparision}
above. 
\end{cor}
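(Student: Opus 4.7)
The plan is to combine the three pieces already in place: Proposition \ref{prop:rearrangement} (which controls $\lambda(B,\Omega)$ in terms of a one-dimensional radial functional with the asymmetry of level sets built in), Lemma \ref{lem:quantitative comparision} (which allows one to swap the $f$-dependent potential $a(r)$ for the honest homogeneous potential $Br/2$ at a quantitative cost), and the variational observation \eqref{eq:variational}. Since $f$ is a principal eigenfunction with $\|f\|_2=1$, and $q(|x|)=|f|^*(x)$ is therefore a normalized radial function in $H_0^{1,\text{rad}}(D_R)$, Proposition \ref{prop:rearrangement} reads
\[
\lambda(B,\Omega)=\int_\Omega |(-i\nabla-\alpha)f|^2\,dx \geq B + 2\pi\int_0^R (q'+aq)^2 (1+c\mathcal{A}^2)^2\, r\,dr,
\]
where I abbreviate $\mathcal{A}=\mathcal{A}(\{|f|>q(r)\})$. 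Using $(1+c\mathcal{A}^2)^2\geq 1+2c\mathcal{A}^2$ and the definition of $\mathfrak{e}(a)$ (since $q$ is normalized), this splits as
\[
\lambda(B,\Omega)\geq B+\mathfrak{e}(a(r))+4\pi c\int_0^R (q'+aq)^2\mathcal{A}^2\, r\,dr.
\]

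For the first bound \eqref{eq:rearrange error}, I keep the asymmetry integral and bound $\mathfrak{e}(a(r))$ from below by $\mathfrak{e}(Br/2)$ using Erdős' comparison lemma (Remark \ref{rem:Remark compare}), which is justified since the bound $a(r)\leq Br/2$ in \eqref{eq:new potential} holds. Then \eqref{eq:variational} gives $B+\mathfrak{e}(Br/2)\geq\lambda(B,D_R)$, and the inequality \eqref{eq:rearrange error} follows (after renaming constants). For the second bound \eqref{eq: compare error}, I instead discard the asymmetry integral and apply the \emph{quantitative} comparison lemma with $\tilde{a}(r)=Br/2$, so that $u_{\tilde{a}}(r)=e^{-Br^2/2}$:
\[
\mathfrak{e}(a(r))\geq \mathfrak{e}(Br/2)+\frac{2\int_0^R (Br/2-a(r))\,p_a|p_a'|\,e^{-Br^2/2}\,r\,dr}{\int_0^R p_a^2\,e^{-Br^2/2}\,r\,dr},
\]
and again use \eqref{eq:variational} to produce $\lambda(B,D_R)$.

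The main task is then to insert the asymmetry into the comparison remainder through the factor $Br/2-a(r)$. The upper bound on $a(r)$ in \eqref{eq:new potential} gives
\[
\frac{Br}{2}-a(r)\geq \frac{Br}{2}\left(1-\frac{1}{(1+c\mathcal{A}^2)^2}\right)\geq \frac{Br}{2}\cdot\frac{2c\mathcal{A}^2}{(1+c)^2},
\]
where in the last step I use the elementary identity $1-(1+x)^{-2}=(2x+x^2)/(1+x)^2\geq 2x/(1+x)^2$ together with $\mathcal{A}^2\leq 1$. Plugging this into the comparison remainder gives exactly an integrand of the form $B\,p_a|p_a'|e^{-Br^2/2}\mathcal{A}^2 r^2$, yielding \eqref{eq: compare error} after absorbing numerical constants. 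The only real subtlety lies in this last algebraic step, making sure that the degeneracy of $(1+c\mathcal{A}^2)^{-2}$ near $\mathcal{A}=0$ is handled uniformly in $\Omega$; beyond that, the corollary is a clean assembly of the results proved above.
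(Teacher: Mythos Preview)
Your proposal is correct and follows essentially the same approach the paper outlines in the text surrounding the corollary: for \eqref{eq:rearrange error} you use the quantitative rearrangement inequality (Proposition~\ref{prop:rearrangement}) together with Erd\H{o}s' comparison lemma (Remark~\ref{rem:Remark compare}) and \eqref{eq:variational}; for \eqref{eq: compare error} you use the non-quantitative rearrangement bound, the quantitative comparison lemma (Lemma~\ref{lem:quantitative comparision}) with $\tilde a=Br/2$, and the upper bound \eqref{eq:new  potential} on $a(r)$ to convert the factor $Br/2-a(r)$ into $cBr\,\mathcal{A}^2$. The paper does not spell out these steps in a formal proof, but your argument fills them in exactly as intended.
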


\noindent Corollary \ref{cor:goal of part 1} implies $\lambda(B,\Omega)\geq\lambda(B,D_{R})$.
Furthermore if $\lambda(B,\Omega)=\lambda(B,D_{R})$, then either
(\ref{eq:rearrange error}) or (\ref{eq: compare error}) can be used
to deduce that almost all of the level sets of $\left|f\right|$ are
disks; and since $f$ is an analytic function, this implies $\Omega$
is a disk. 

The first bound, given in (\ref{eq:rearrange error}), is established
with \textit{our quantitative version of the rearrangement inequality}
and with Erdős' comparison lemma. In the absence of a magnetic field,
i.e. $B=0$, this bound reduces to the usual estimate used in all
the proofs of the quantitative Faber-Krahn inequality, e.g., \cite{Bhattacharya},
\cite{key-1} and \cite{key-4}. 

Our second bound, given in (\ref{eq: compare error}), is established
with Erdős' rearrangement inequality, \textit{our quantitative version
of the comparison lemma} and our estimate in (\ref{eq:new  potential}),
which follows from the quantitative isoperimetric inequality. This
bound, on the other hand, has no such analog in the absence of a magnetic
field. 

\part{\label{part:2}The Quantitative Version}

\noindent Here we prove Theorem \ref{thm:Main Result} from Corollary
\ref{cor:goal of part 1} by extracting the asymmetry of the whole
domain from the asymmetry of the level sets in (\ref{eq:rearrange error})
and (\ref{eq: compare error}). Let
\begin{equation}
\left|\left\{ q\left(\left|x\right|\right)>s\right\} \right|=\left|\Omega\right|\left(1-\frac{1}{2}\mathcal{A}\left(\Omega\right)\right).\label{eq: choice of s}
\end{equation}
Following Hansen and Nadirashvili \cite{key-4} we split the proof
into two cases, depending on whether $s$ is small or large. Lemma
\ref{lem:property of asymmetry} in the appendix will be useful. 

\section{\label{sec:The-First-Case:}The First Case: $s\lesssim e^{-BR^{2}}\mathcal{A}\left(\Omega\right)$}

\noindent We assume
\begin{equation}
s\leq\frac{1}{8}\left|\Omega\right|^{-\frac{1}{2}}e^{-\frac{BR^{2}}{4}}\mathcal{A}\left(\Omega\right).\label{eq: case 1}
\end{equation}

\noindent We use the representation in (\ref{eq:Representation}),
which allows us to adapt the usual strategy for dealing with the Dirichlet
Laplacian; and when $B=0$, the argument reduces to Hansen and Nadirashvili's
proof of their quantitative Faber-Krahn inequality \cite{key-4}. 

We write $E(B,\Omega):=\lambda(B,\Omega)-B$. Let $p:=qu_{a}^{-\frac{1}{2}}$
with $q,a$ as in Corollary \ref{cor:goal of part 1} and $u_{a}$
as in (\ref{eq:definition of u}), and let $\tilde{p}(r):=p(r)-se^{\int_{0}^{q^{-1}(s)}a(\tau)d\tau}$.
Since $\tilde{p}^{\prime}=p^{\prime}$, it follows from the rearrangement
inequality that 
\[
E(B,\Omega)\geq2\pi\int_{0}^{R}\left(q'+aq\right)^{2}rdr=2\pi\int_{0}^{R}\left(\tilde{p}^{\prime}\right)^{2}u_{a}rdr\geq2\pi\int_{0}^{q^{-1}(s)}\left(\tilde{p}^{\prime}\right)^{2}u_{a}rdr.
\]
Since $\tilde{p}$ vanishes at $q^{-1}(s)$, it is admissible in the
variational problem in (\ref{eq:Representation}) but on the disk
$\left\{ q>s\right\} $, and
\[
\frac{E(B,\Omega)}{2\pi\int_{0}^{q^{-1}(s)}\tilde{p}^{2}u_{a}rdr}\geq\inf_{p\in H_{0}^{1,\text{rad}}\left(\left\{ q>s\right\} \right)}\frac{\int_{0}^{q^{-1}(s)}\left(p^{\prime}\right)^{2}u_{a}rdr}{\int_{0}^{q^{-1}(s)}p^{2}u_{a}rdr}\geq E\left(B,\left\{ q>s\right\} \right),
\]
where the last inequality follows from the comparison lemma and the
observation in (\ref{eq:variational}). Using the scaling property
in Remark \ref{rem:scaling} we further estimate 
\begin{equation}
\frac{E(B,\Omega)}{2\pi\int_{0}^{q^{-1}(s)}\tilde{p}^{2}u_{a}rdr}\geq\frac{\left|\Omega\right|}{\left|\left\{ q>s\right\} \right|}E\left(B\frac{\left|\left\{ q>s\right\} \right|}{\left|\Omega\right|},D_{R}\right)\geq\frac{\left|\Omega\right|}{\left|\left\{ q>s\right\} \right|}E(B,D_{R}),\label{eq: first step}
\end{equation}
where the last inequality follows from Lemma \ref{lemma.disk} in
the appendix and again the comparison lemma. Finally, we estimate
the denominator 
\[
2\pi\int_{0}^{q^{-1}(s)}\tilde{p}^{2}u_{a}rdr=1-2\pi\int_{q^{-1}(s)}^{R}q^{2}rdr\ \ \ \ \ \ \ \ \ \ \ \ \ \ \ \ \ \ \ \ \ \ \ \ \ \ \ \ \ \ \ \ \ \ \ \ \ \ \ \ \ \ \ \ \ \ \ \ \ \ \ \ \ \ \ 
\]
\[
\ \ \ \ \ \ \ \ \ \ \ \ \ \ \ \ \ \ \ \ \ \ \ \ \ \ +2\pi\int_{0}^{q^{-1}(s)}\left((se^{\int_{0}^{q^{-1}(s)}a(\tau)d\tau})^{2}-2pse^{\int_{0}^{q^{-1}(s)}a(\tau)d\tau}\right)u_{a}rdr
\]
\[
\ \ \ \geq1-s^{2}\vert\{q<s\}\vert+s^{2}\vert\{q>s\}\vert-2se^{\frac{BR^{2}}{4}}\vert\Omega\vert^{\frac{1}{2}}
\]
\[
\geq1-2se^{\frac{BR^{2}}{4}}\vert\Omega\vert^{\frac{1}{2}}.\ \ \ \ \ \ \ \ \ \ \ \ \ \ \ \ \ \ \ \ \ \ \ \ \ \ \ \ \ \ \ \ 
\]
At the penultimate inequality we used that $e^{\int_{0}^{q^{-1}(s)}a(\tau)d\tau}\leq e^{\frac{BR^{2}}{4}}$
and that 
\[
2\pi\int_{0}^{q^{-1(s)}}pu_{a}rdr\leq2\pi\int_{0}^{R}qrdr\leq2\pi\vert\Omega\vert^{\frac{1}{2}}\int_{0}^{R}q^{2}rdr=\vert\Omega\vert^{\frac{1}{2}}.
\]
Combining the above estimate with (\ref{eq: first step}), we have
\[
E(B,\Omega)\geq E(B,D_{R})\frac{\vert\Omega\vert(1-2se^{\frac{BR^{2}}{4}}\vert\Omega\vert^{\frac{1}{2}})}{\vert\{q>s\}\vert},
\]
and the choice of $s$ in (\ref{eq: choice of s}) and our assumption
in (\ref{eq: case 1}) give us
\[
E(B,\Omega)\geq E(B,D_{R})\frac{1-\frac{1}{4}\mathcal{A}(\Omega)}{1-\frac{1}{2}\mathcal{A}(\Omega)}\geq E(B,D_{R})\Big(1+\frac{1}{4}\mathcal{A}(\Omega)\Big).
\]
Then using Lemma \ref{lem:bounds} in the appendix we find
\[
\lambda(B,\Omega)\geq\lambda(B,D_{R})\left(1+c\min(1,(BR^{2})^{-1}e^{-\frac{3}{4}BR^{2}})\mathcal{A}(\Omega)\right),
\]
which yields the desired estimates in (\ref{eq:quantitative}) and
(\ref{eq.B.small}). This concludes the proof of Theorem \ref{thm:Main Result}
in the first case. 

\section{The Second Case: $s\gtrsim e^{-BR^{2}}\mathcal{A}\left(\Omega\right)$}

\noindent We assume
\begin{equation}
s\geq\frac{1}{8}\left|\Omega\right|^{-\frac{1}{2}}e^{-\frac{BR^{2}}{4}}\mathcal{A}\left(\Omega\right).\label{eq: case 2}
\end{equation}

\noindent Now we have to treat weak and strong magnetic fields separately.
When $B$ is small, we only use the first bound, given in (\ref{eq:rearrange error}),
of Corollary \ref{cor:goal of part 1}. As $B$ increases, it becomes
necessary to also make use of our second bound in (\ref{eq: compare error}). 

\subsection{\label{subsec:Weak-Magnetic-Fields}Weak Magnetic Fields }

We consider $0\leq BR^{2}\leq\frac{1}{\pi}$ and prove the stability
estimate in (\ref{eq.B.small}); and when $B=0$, the argument reduces
to Hansen and Nadirashvili's proof of their quantitative Faber-Krahn
inequality \cite{key-4}. 

We work on the annulus $\left\{ q\left(\left|x\right|\right)\leq s\right\} $,
whose area is proportional to the asymmetry of the domain. From the
first bound, given in (\ref{eq:rearrange error}), of Corollary \ref{cor:goal of part 1},
the choice of $s$ in (\ref{eq: choice of s}), and Lemma \ref{lem:property of asymmetry}
we have
\[
\lambda(B,\Omega)-\lambda(B,D_{R})\ \ \ \ \ \ \ \ \ \ \ \ \ \ \ \ \ \ \ \ \ \ \ \ \ \ \ \ \ \ \ \ \ \ \ \ \ \ \ \ \ \ \ \ \ \ \ \ \ \ \ \ \ \ \ \ \ \ 
\]
\[
\geq c\int_{q^{-1}(s)}^{R}\left(q'(r)+a(r)q(r)\right)^{2}\mathcal{A}^{2}\left(\left\{ \left|f\right|>q(r)\right\} \right)rdr\ \ \ \ \ \ \ \ \ \ \ \ \ \ \ 
\]
\[
\geq cR^{2}\mathcal{A}^{2}\left(\Omega\right)\int_{q^{-1}(s)}^{R}\left(q'(r)+a(r)q(r)\right)^{2}r^{-1}dr\ \ \ \ \ \ \ \ \ \ \ \ \ \ \ \ \ \ \ \ \ \ 
\]
\[
\ \ \geq cR^{2}\mathcal{A}^{2}\left(\Omega\right)\left(\sqrt{\int_{q^{-1}(s)}^{R}q'(r)^{2}\,r^{-1}dr}-\sqrt{\int_{q^{-1}(s)}^{R}\left(\frac{B}{2}q\right)^{2}rdr}\right)^{2}
\]
\[
\geq cR^{2}\mathcal{A}^{2}\left(\Omega\right)\left(\frac{s}{\sqrt{\left|\left\{ q\left(\left|x\right|\right)\leq s\right\} \right|}}-\frac{B}{2}s\sqrt{\left|\left\{ q\left(\left|x\right|\right)\leq s\right\} \right|}\right)^{2}\ \ \ \ \ \ \ 
\]
\[
\geq cR^{-2}\mathcal{A}^{3}\left(\Omega\right)\left(2-B\left|\Omega\right|\right)^{2}\ \ \ \ \ \ \ \ \ \ \ \ \ \ \ \ \ \ \ \ \ \ \ \ \ \ \ \ \ \ \ \ \ \ \ \ \ \ \ \ \ \ \ \ \ \ 
\]
\[
\geq cR^{-2}\mathcal{A}^{3}\left(\Omega\right),\ \ \ \ \ \ \ \ \ \ \ \ \ \ \ \ \ \ \ \ \ \ \ \ \ \ \ \ \ \ \ \ \ \ \ \ \ \ \ \ \ \ \ \ \ \ \ \ \ \ \ \ \ \ \ \ \ \ \ \ \ 
\]
since $B\leq\frac{1}{\left|\Omega\right|}=\frac{1}{\pi R^{2}}$. At
the penultimate inequality we also used the assumption in (\ref{eq: case 2}).
Using Lemma \ref{lem:bounds}, we conclude $\lambda(B,\Omega)\geq\lambda(B,D_{R})(1+c\mathcal{A}(\Omega)^{3})$. 

\subsection{Strong Magnetic Fields}

We consider $BR^{2}>\frac{1}{\pi}$ and prove our stability estimate
in (\ref{eq:quantitative}); instead of integrating as above on $\left\{ q\left(\left|x\right|\right)\leq s\right\} $,
we choose to work closer to the boundary on a smaller annulus whose
area is now proportional to the \textit{spectral defici}t of the domain
\[
\mathcal{D}(B,\Omega):=\frac{\lambda(B,\Omega)}{\lambda(B,D_{R})}-1.
\]
We treat two cases, depending on whether $q$ is large or small near
the boundary: $q(R(1-\mathcal{D}\left(B,\Omega\right)^{\alpha}))>R^{-1}\mathcal{D}(B,\Omega)^{\beta}$
and $q(R(1-\mathcal{D}\left(B,\Omega\right)^{\alpha}))\leq R^{-1}\mathcal{D}(B,\Omega)^{\beta}$,
where $\alpha=\frac{1}{5}$ and $\beta=\frac{3}{10}$ are chosen to
optimize our result. For proving our estimate in (\ref{eq:quantitative}),
we can assume that the spectral deficit is very small
\begin{equation}
\mathcal{D}(B,\Omega)^{\alpha}<\min\left\{ \frac{1}{2BR^{2}},\frac{1}{2}\right\} .\label{eq:suffices}
\end{equation}

\subsubsection{\label{subsec:q is large} Suppose $q(R(1-\mathcal{D}\left(B,\Omega\right)^{\alpha}))>R^{-1}\mathcal{D}(B,\Omega)^{\beta}$.}

Then by continuity of $q$,
\begin{equation}
q(R(1-\mathcal{D}\left(B,\Omega\right)^{\tilde{\alpha}}))=R^{-1}\mathcal{D}(B,\Omega)^{\beta}\ \text{for some}\ \tilde{\alpha}>\alpha.\label{eq:tilde}
\end{equation}
If $q(R(1-\mathcal{D}(B,\Omega)^{\tilde{\alpha}}))\geq s$, our assumption
in (\ref{eq: case 2}) readily yields
\[
cR^{-1}e^{-\frac{BR^{2}}{4}}\mathcal{A}\left(\Omega\right)\leq s\leq q(R(1-\mathcal{D}\left(B,\Omega\right)^{\tilde{\alpha}}))=R^{-1}\mathcal{D}(B,\Omega)^{\beta},
\]
and therefore 
\begin{equation}
\mathcal{D}(B,\Omega)\ge ce^{-\frac{BR^{2}}{4\beta}}\mathcal{A}\left(\Omega\right)^{\frac{1}{\beta}}.\label{eq:I-1}
\end{equation}

If $q(R(1-\mathcal{D}(B,\Omega)^{\tilde{\alpha}}))<s$, then the weak-field
argument from Section \ref{subsec:Weak-Magnetic-Fields} applies mutatis
mutandis. From the first bound, given in (\ref{eq:rearrange error}),
of Corollary \ref{cor:goal of part 1}, the relation in (\ref{eq:tilde}),
and Lemma \ref{lem:property of asymmetry} we have
\begin{align*}
 & \lambda(B,D_{R})\mathcal{D}\left(B,\Omega\right)\\
 & \geq c\int_{R\left(1-\mathcal{D}(B,\Omega)^{\tilde{\alpha}}\right)}^{R}\left(q'(r)+a(r)q(r)\right)^{2}\mathcal{A}^{2}\left(\left\{ \left|f\right|>q(r)\right\} \right)rdr\\
 & \geq cR^{2}\mathcal{A}(\Omega)^{2}\left(\frac{q\left(R\left(1-\mathcal{D}(B,\Omega)^{\tilde{\alpha}}\right)\right)}{\sqrt{2R^{2}\mathcal{D}(B,\Omega)^{\tilde{\alpha}}}}-\frac{B}{2}q\left(R\left(1-\mathcal{D}(B,\Omega)^{\tilde{\alpha}}\right)\right)\sqrt{2R^{2}\mathcal{D}(B,\Omega)^{\tilde{\alpha}}}\right)^{2}\\
 & =cR^{-2}\mathcal{A}(\Omega)^{2}\mathcal{D}\left(B,\Omega\right)^{2\beta-\tilde{\alpha}}\left(1-BR^{2}\mathcal{D}(B,\Omega)^{\tilde{\alpha}}\right)^{2}.
\end{align*}
However, $\tilde{\alpha}$ depends on $B$ and $\Omega$. Fortunately
since $\tilde{\alpha}>\alpha$ and $\mathcal{D}\left(B,\Omega\right)<1$,
we have $\mathcal{D}\left(B,\Omega\right)^{\tilde{\alpha}}<\mathcal{D}\left(B,\Omega\right)^{\alpha}$;
this allows to replace $\mathcal{D}\left(B,\Omega\right)^{\tilde{\alpha}}$
in the above with $\mathcal{D}\left(B,\Omega\right)^{\alpha}$. Furthermore,
the bound in (\ref{eq:suffices}) offsets the large $BR^{2}$ in the
parenthetical expression, which thereby remains positive. Using Lemma
\ref{lem:bounds},
\[
\mathcal{D}\left(B,\Omega\right)\geq c\frac{\mathcal{A}(\Omega)^{2}}{R^{2}\lambda(B,D_{R})}\mathcal{D}\left(B,\Omega\right)^{2\beta-\alpha}\geq c\frac{\mathcal{A}(\Omega)^{2}}{1+BR^{2}}\mathcal{D}\left(B,\Omega\right)^{2\beta-\alpha},
\]
and therefore 
\begin{equation}
\mathcal{D}\left(B,\Omega\right)^{1-2\beta+\alpha}\ge c\frac{\mathcal{A}(\Omega)^{2}}{1+BR^{2}}.\label{eq:I-2}
\end{equation}
With our above choice of $\alpha$ and $\beta$, the inequalities
in (\ref{eq:I-1}) and (\ref{eq:I-2}) both yield the same desired
estimate in (\ref{eq:quantitative}). 

Thus far, we have only used the first bound, given in (\ref{eq:rearrange error}),
of Corollary \ref{cor:goal of part 1} which is based on the quantitative
version of the rearrangement inequality. 

\subsubsection{\label{subsec:q is small}Suppose $q(R(1-\mathcal{D}\left(B,\Omega\right)^{\alpha}))\protect\leq R^{-1}\mathcal{D}(B,\Omega)^{\beta}$. }

If $q(R(1-\mathcal{D}(B,\Omega)^{\alpha}))\geq s,\ \ $ again our
assumption in (\ref{eq: case 2}) readily yields
\[
cR^{-1}e^{-\frac{BR^{2}}{4}}\mathcal{A}\left(\Omega\right)\leq s\leq q(R(1-\mathcal{D}\left(B,\Omega\right)^{\alpha}))\leq R^{-1}\mathcal{D}(B,\Omega)^{\beta}
\]
and therefore, as above, 
\begin{equation}
\mathcal{D}(B,\Omega)\geq ce^{-\frac{BR^{2}}{4\beta}}\mathcal{A}\left(\Omega\right)^{\frac{1}{\beta}}.\label{eq:II-1}
\end{equation}

But when $q(R(1-\mathcal{D}(B,\Omega)^{\alpha}))<s$, the weak-field
argument from Section \ref{subsec:Weak-Magnetic-Fields} is no longer
useful: it requires a \textit{lower bound} on $q(R(1-\mathcal{D}(B,\Omega)^{\alpha}))$,
as above in Section \ref{subsec:q is large}, to be effective. That
argument, however, is based wholly on the first bound, given in (\ref{eq:rearrange error}),
of Corollary \ref{cor:goal of part 1}. 

Now we instead turn to our second bound, given in (\ref{eq: compare error}),
which is based on our quantitative version of the comparison lemma.
Here there is hope: it is possible to bound the remainder term in
(\ref{eq: compare error}) from below \textit{independently of} $q$. 
\begin{lem}
\label{lem:Comparison lower bound}Let $p_{a}$ be as in Corollary
\ref{cor:goal of part 1}. Then there exists a universal constant
$c>0$, independent of $B$ and $\Omega$, such that for any $\,0<\varepsilon<\frac{1}{2}$
\[
\frac{\int_{R\left(1-\varepsilon\right)}^{R}\,p_{a}\left|p_{a}^{\prime}\right|e^{-\frac{Br^{2}}{2}}\mathcal{A}^{2}\left(\left\{ \left|f\right|>q(r)\right\} \right)r^{2}dr}{\int_{0}^{R}p_{a}^{2}e^{-\frac{Br^{2}}{2}}rdr}\geq ce^{-\frac{BR^{2}}{2}}\mathcal{M}_{\varepsilon}\varepsilon^{2},
\]
where $\mathcal{M}_{\varepsilon}:=\inf\left\{ \mathcal{A}^{2}\left(\left\{ \left|f\right|>q(r)\right\} \right):\,R\left(1-\varepsilon\right)<r<R\right\} $. 
\end{lem}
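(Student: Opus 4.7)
The plan is to pull $\mathcal{M}_{\varepsilon}$ out of the integrand and then estimate the numerator and denominator using structural properties of the radial minimizer $p_{a}$. Since $\mathcal{A}^{2}(\{|f|>q(r)\}) \geq \mathcal{M}_{\varepsilon}$ for $r \in [R(1-\varepsilon),R]$, the factor $\mathcal{M}_{\varepsilon}$ pulls out of the numerator. For the denominator, Proposition \ref{prop:rearrangement} yields $0 \leq a(r) \leq Br/2$, hence $u_{a}(r) = \exp(-2\int_{0}^{r}a(s)\,ds) \in [e^{-Br^{2}/2},1]$; combined with the normalization $\int_{0}^{R}p_{a}^{2}u_{a}r\,dr = 1/(2\pi)$, this gives the clean upper bound $\int_{0}^{R}p_{a}^{2}e^{-Br^{2}/2}r\,dr \leq 1/(2\pi)$.

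For the numerator, a short calculation splitting into the cases $BR^{2}\leq 2$ and $BR^{2}>2$ shows that $e^{-Br^{2}/2}r^{2}\geq R^{2}(1-\varepsilon)^{2}e^{-BR^{2}/2}\geq R^{2}e^{-BR^{2}/2}/4$ on $[R(1-\varepsilon),R]$, using $\varepsilon<1/2$. The identity $p_{a}|p_{a}'|=-\frac{1}{2}(p_{a}^{2})'$ together with $p_{a}(R)=0$ gives $\int_{R(1-\varepsilon)}^{R}p_{a}|p_{a}'|\,dr=\frac{1}{2}p_{a}(R(1-\varepsilon))^{2}$, so assembling the above, the ratio in the lemma is bounded from below by
\[
\frac{\pi R^{2}}{4}\,e^{-BR^{2}/2}\,\mathcal{M}_{\varepsilon}\,p_{a}(R(1-\varepsilon))^{2}.
\]
The proof of the lemma therefore reduces to a universal pointwise lower bound $p_{a}(R(1-\varepsilon))\geq c\,\varepsilon/R$.

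The main obstacle is precisely this pointwise bound. My plan is to invoke the Euler-Lagrange equation (\ref{variational equation}), which rewrites as $(p_{a}'u_{a}r)'=-\mathfrak{e}(a)p_{a}u_{a}r\leq 0$, so $G(r):=|p_{a}'(r)|u_{a}(r)r = \mathfrak{e}(a)\int_{0}^{r}p_{a}u_{a}s\,ds$ is non-decreasing. Combining the monotonicity of $G$ with $u_{a}(s)s\leq R$ on $[R(1-\varepsilon),R]$, integration of $|p_{a}'|=G/(u_{a}s)$ yields
\[
p_{a}(R(1-\varepsilon))=\int_{R(1-\varepsilon)}^{R}\frac{G(s)}{u_{a}(s)s}\,ds\geq\varepsilon\,G(R(1-\varepsilon)),
\]
so it remains to show $G(R(1-\varepsilon))\geq c/R$ uniformly in $B$. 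The starting point is $p_{a}(0)\geq 1/(\sqrt{\pi}R)$, obtained from $u_{a}\leq 1$ and the normalization via $p_{a}(0)^{2}R^{2}/2\geq \int_{0}^{R}p_{a}^{2}u_{a}r\,dr=1/(2\pi)$. A case distinction on whether $|p_{a}'|$ concentrates near $R$ or spreads over $[0,R]$ then finishes the argument: in the former case $p_{a}(R(1-\varepsilon))$ is comparable to $p_{a}(0)$ directly, while in the latter case $G(R(1-\varepsilon))$ is of order $1/R$ by direct integration against the representation of $G$ combined with the uniform lower bound on $p_{a}$.
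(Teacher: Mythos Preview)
Your reduction is fine up to and including the line ``the proof of the lemma therefore reduces to a universal pointwise lower bound $p_{a}(R(1-\varepsilon))\geq c\,\varepsilon/R$.'' The gap is in your final paragraph: the assertion $G(R(1-\varepsilon))\geq c/R$ uniformly in $B$ is false. Indeed, since $G$ is increasing, $G(R(1-\varepsilon))\le G(R)=\mathfrak{e}(a)\int_0^R p_a u_a s\,ds$, and Cauchy--Schwarz together with $\int_0^R p_a^2 u_a s\,ds=1/(2\pi)$ gives $\int_0^R p_a u_a s\,ds\le (2\pi)^{-1/2}(\int_0^R u_a s\,ds)^{1/2}$. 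For $a(r)=Br/2$ and $B$ large this yields $G(R)\lesssim \mathfrak{e}(Br/2)\,B^{-1/2}$, which by Lemma~\ref{lem:bounds} decays like $e^{-cBR^2}$. Your case distinction does not rescue this: once $\varepsilon R$ is smaller than the boundary-layer width, most of $|p_a'|$ lies in $[0,R(1-\varepsilon)]$ (your ``spread'' case), yet $G(R(1-\varepsilon))$ is still exponentially small. The phrase ``uniform lower bound on $p_a$'' cannot help either, since any bound of the form $G(R(1-\varepsilon))=\mathfrak{e}(a)\int_0^{R(1-\varepsilon)}p_a u_a s\,ds\ge\mathfrak{e}(a)\cdot(\text{something})$ still carries the exponentially small prefactor $\mathfrak{e}(a)$.

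The fix is a one-line change in what you declare monotone. From the Euler--Lagrange equation you correctly get that $|p_a'|u_a r$ is non-decreasing; since $1/u_a$ is also non-decreasing, so is the product $|p_a'(r)|\,r$. This is the quantity to track, because it has no $u_a$ factor to make it small. The paper then argues
\[
p_a(R(1-\varepsilon))=\int_{R(1-\varepsilon)}^{R}|p_a'|\,dr\ \ge\ \frac{1}{R}\int_{R(1-\varepsilon)}^{R}|p_a'|\,r\,dr\ \ge\ \frac{\varepsilon}{R}\int_0^R |p_a'|\,r\,dr,
\]
where the last step uses that the average of the increasing function $|p_a'|r$ on $[R(1-\varepsilon),R]$ dominates its average on $[0,R]$. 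Finally, the two-dimensional Sobolev embedding $W^{1,1}_0\hookrightarrow L^2$ (applied to the radial function $x\mapsto p_a(|x|)$) gives $\int_0^R |p_a'|\,r\,dr\ge c\bigl(\int_0^R p_a^2\,r\,dr\bigr)^{1/2}\ge c\bigl(\int_0^R p_a^2 u_a\,r\,dr\bigr)^{1/2}=c/\sqrt{2\pi}$, which is exactly the uniform bound $p_a(R(1-\varepsilon))\ge c\varepsilon/R$ you were after.
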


\begin{proof}
Since $p_{a}^{\prime}<0$, 
\begin{align*}
 & \int_{R\left(1-\varepsilon\right)}^{R}\,p_{a}\left|p_{a}^{\prime}\right|e^{-\frac{Br^{2}}{2}}\mathcal{A}^{2}\left(\left\{ \left|f\right|>q(r)\right\} \right)r^{2}dr\\
 & \geq cM_{\varepsilon}R^{2}e^{-\frac{BR^{2}}{2}}\int_{R(1-\varepsilon)}^{R}-p_{a}(r)p_{a}'(r)dr=cM_{\varepsilon}R^{2}e^{-\frac{BR^{2}}{2}}p_{a}(R(1-\varepsilon))^{2}.
\end{align*}

\noindent Furthermore, 
\[
p_{a}(R(1-\varepsilon))=\int_{R(1-\varepsilon)}^{R}-p_{a}^{\prime}(r)dr\geq\frac{1}{R}\int_{R(1-\varepsilon)}^{R}-p_{a}^{\prime}(r)rdr\geq\frac{\varepsilon}{R}\int_{0}^{R}-p_{a}^{\prime}(r)rdr,
\]
where in the last inequality we used that $r\mapsto-p_{a}^{\prime}(r)r$
is increasing (see (\ref{variational equation})). The lemma follows
from the Sobolev inequality

\noindent $\ \ \ \ \ \ \ \ \ \ \ \ \ \int_{0}^{R}-p_{a}^{\prime}(r)rdr\geq c\left(\int_{0}^{R}p_{a}^{2}(r)rdr\right)^{\frac{1}{2}}\geq c\left(\int_{0}^{R}p_{a}^{2}(r)\,e^{-\frac{Br^{2}}{2}}rdr\right)^{\frac{1}{2}}.$
\end{proof}
Before proceeding with our argument, we remark that Lemma \ref{lem:Comparison lower bound}
would not have been useful for dealing with the previous situation
in Section \ref{subsec:q is large}.

If $q(R(1-\mathcal{D}(B,\Omega)^{\alpha}))<s$, then we use the above
lemma with $\varepsilon=\mathcal{D}\left(B,\Omega\right)^{\alpha}$.
From our second bound, given in (\ref{eq: compare error}), of Corollary
\ref{cor:goal of part 1}, Lemma \ref{lem:Comparison lower bound},
and Lemma \ref{lem:property of asymmetry} we have
\[
\lambda(B,D_{R})\mathcal{D}\left(B,\Omega\right)\geq cBe^{-\frac{BR^{2}}{2}}\mathcal{A}\left(\Omega\right)^{2}\mathcal{D}\left(B,\Omega\right)^{2\alpha}.
\]
Again using Lemma \ref{lem:bounds} and now that $BR^{2}>\frac{1}{\pi}$,
\[
\mathcal{D}\left(B,\Omega\right)\geq c\frac{e^{-\frac{BR^{2}}{2}}}{1+\left(BR^{2}\right)^{-1}}\mathcal{A}\left(\Omega\right)^{2}\mathcal{D}\left(B,\Omega\right)^{2\alpha}\geq ce^{-\frac{BR^{2}}{2}}\mathcal{A}\left(\Omega\right)^{2}\mathcal{D}\left(B,\Omega\right)^{2\alpha},
\]
and therefore 
\begin{equation}
\mathcal{D}\left(B,\Omega\right)^{1-2\alpha}\geq ce^{-\frac{BR^{2}}{2}}\mathcal{A}\left(\Omega\right)^{2}.\label{eq:II-2}
\end{equation}
With our above choice of $\alpha$ and $\beta$, the inequalities
in (\ref{eq:II-1}) and (\ref{eq:II-2}) both yield the same desired
estimate in (\ref{eq:quantitative}). This concludes the proof of
Theorem \ref{thm:Main Result}. 

\section*{Acknowledgements}

\noindent We are most grateful to Søren Fournais for encouraging our
collaboration. R.G. first suggested this problem in October 2017 to
Michael Loss, whom he thanks for the initial encouragement. This paper
is based on work partially supported by the Independent Research Fund
Denmark via the project grant “Mathematics of the dilute Bose gas”
No. 0135-00166B (L.J. \& L.M.). 

\appendix

\section{The Magnetic Laplacian on the Disk}

\noindent It follows from Erdős' rearrangement inequality and comparison
lemma, and from the observation in (\ref{eq:variational}) that the
principal eigenfunction of the magnetic Laplacian on the disk is radially
symmetric.
\begin{thm}
\label{thm:Symmetry}As above, let $D_{R}$ be a disk of radius $R$
centered at the origin. Then
\[
\lambda(B,D_{R})=\inf_{q\in H_{0}^{1,\text{rad}}\left(D_{R}\right)}\frac{\int_{D_{R}}\left|\left(-i\nabla-\alpha\right)q(\left|x\right|)\right|^{2}dx}{\int_{D_{R}}q\left(\left|x\right|\right)^{2}dx},
\]
where $H_{0}^{1,\text{rad}}(D_{R}):=\left\{ q:[0,R]\rightarrow\mathbb{R}\ \text{such that}\ x\mapsto q(\vert x\vert)\ \text{belongs to}\ H_{0}^{1}(D_{R})\right\} .$ 
\end{thm}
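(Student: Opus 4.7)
The plan is to establish both inequalities between $\lambda(B,D_{R})$ and the radial infimum on the right. The easy direction, $\lambda(B,D_{R})\leq B+\mathfrak{e}(Br/2)$, is immediate, since any $q\in H_{0}^{1,\text{rad}}(D_{R})$ gives an admissible test function $x\mapsto q(|x|)$ in the unrestricted variational definition \eqref{eq.lambda}; this is already recorded in \eqref{eq:variational}. For the reverse inequality I would let $f$ be a principal eigenfunction of the magnetic Dirichlet Laplacian on $D_{R}$ normalized so that $\|f\|_{2}=1$ (standard elliptic theory guarantees such an $f$ exists and is complex analytic), and apply Erdős' rearrangement inequality (Lemma \ref{prop:Erdos}) with $\Omega=D_{R}$. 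Setting $q(|x|):=|f|^{*}(x)$, the rearrangement preserves the $L^{2}$ norm, so $2\pi\int_{0}^{R}q(r)^{2}rdr=1$.

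The core step is to rewrite the right-hand side of \eqref{eq:1} in radial form. Because $\tilde{\alpha}$ is azimuthal, $\tilde{\alpha}\cdot\nabla q(|x|)=0$ and $|\tilde{\alpha}|^{2}=a(r)^{2}$, so $|(-i\nabla-\tilde{\alpha})q(|x|)|^{2}=(q'(r))^{2}+a(r)^{2}q(r)^{2}$. One also computes $\text{rot}(\tilde{\alpha})=a'(r)+a(r)/r$, and an integration by parts—whose boundary terms vanish because $q(R)=0$ and $a$ is bounded near the origin—yields
\[
-\int_{D_{R}}\text{rot}(\tilde{\alpha})q(|x|)^{2}dx=4\pi\int_{0}^{R}a(r)q(r)q'(r)rdr.
\]
The three pieces on the right of \eqref{eq:1} thus collapse into the clean expression $B+2\pi\int_{0}^{R}(q'(r)+a(r)q(r))^{2}rdr$. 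Combined with the $L^{2}$-normalization of $q$ and the definition \eqref{eq:disk energy} of $\mathfrak{e}$, this gives $\lambda(B,D_{R})\geq B+\mathfrak{e}(a)$.

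To finish, I would invoke the comparison lemma (Remark \ref{rem:Remark compare}). Since Lemma \ref{prop:Erdos} guarantees $0\leq a(r)\leq Br/2$, we obtain $\mathfrak{e}(a)\geq\mathfrak{e}(Br/2)$, hence $\lambda(B,D_{R})\geq B+\mathfrak{e}(Br/2)$. Combining with \eqref{eq:variational} gives equality, and identifying $B+\mathfrak{e}(Br/2)$ with the radial Rayleigh quotient in the statement is a second one-line integration by parts that absorbs the cross term in $(q'+(Br/2)q)^{2}$ against the additive $B$, recovering $(q')^{2}+(Br/2)^{2}q^{2}$ in the numerator. The only genuine obstacle is the algebraic bookkeeping that fuses $|(-i\nabla-\tilde{\alpha})q|^{2}-\text{rot}(\tilde{\alpha})q^{2}$ into the perfect radial square $(q'+aq)^{2}$; once this identity is in hand, the theorem drops out of Erdős' two ingredients with no further analytic input.
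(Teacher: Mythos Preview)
Your proof is correct and follows exactly the approach the paper indicates: the easy direction via \eqref{eq:variational}, the reverse by applying Erd\H{o}s' rearrangement inequality with $\Omega=D_{R}$ and then the comparison lemma, together with the radial reformulation (carried out in the paper as the calculation in the proof of Lemma~\ref{lemma.disk}) that identifies the right side of \eqref{eq:1} with $B+\mathfrak{e}(a)$. The paper compresses this into the single sentence preceding the theorem, citing precisely the same three ingredients.
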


\noindent Thus we write $\lambda(B,D_{R})$ more conveniently in terms
of polar coordinates. 
\begin{lem}
\label{lemma.disk} Let $H_{0}^{1,\text{rad}}(D_{R})$ be as in Theorem
\ref{thm:Symmetry}. Then
\[
\lambda(B,D_{R})=B+\inf_{q\in H_{0}^{1,\text{rad}}(D_{R})}\frac{2\pi\int_{0}^{R}(q'(r)+\frac{Br}{2}q(r))^{2}rdr}{2\pi\int_{0}^{R}q(r)^{2}rdr}=:B+\mathfrak{e}\left(Br/2\right).
\]
\end{lem}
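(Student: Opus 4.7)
The plan is to invoke Theorem \ref{thm:Symmetry} to restrict the variational problem to real radial functions, compute the Dirichlet magnetic energy for such a function in polar coordinates, and then use a single integration by parts to recast the quadratic form into the squared form appearing in $\mathfrak{e}(Br/2)$, up to the constant $B$.

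First, by Theorem \ref{thm:Symmetry}, it is enough to work with a real radial trial function $q(|x|) \in H_0^{1,\mathrm{rad}}(D_R)$. For such a $q$, since $\alpha = \tfrac{B}{2}(-x_2,x_1)$ is real, the cross term $2\,\mathrm{Re}(i\nabla q \cdot \alpha q)$ vanishes, so
\[
|(-i\nabla-\alpha)q|^{2} = |\nabla q|^{2} + |\alpha|^{2} q^{2} = |\nabla q|^{2} + \tfrac{B^{2}|x|^{2}}{4}q^{2}.
\]
Passing to polar coordinates, the numerator of the Rayleigh quotient becomes
\[
\int_{D_R} |(-i\nabla-\alpha)q(|x|)|^{2}\,dx = 2\pi \int_{0}^{R} \Bigl[ q'(r)^{2} + \tfrac{B^{2}r^{2}}{4} q(r)^{2} \Bigr] r\,dr.
\]

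Next I would expand the target integrand,
\[
\Bigl(q'(r) + \tfrac{Br}{2} q(r)\Bigr)^{2} = q'(r)^{2} + Br\, q(r)q'(r) + \tfrac{B^{2}r^{2}}{4} q(r)^{2},
\]
so the difference between the two numerators equals $2\pi B \int_{0}^{R} r^{2} q(r) q'(r)\,dr$. An integration by parts, using $q(R)=0$ and the smoothness of $r q^2$ at the origin, gives
\[
2\pi B \int_{0}^{R} r^{2} q q'\,dr = \pi B \int_{0}^{R} r^{2} (q^{2})'\,dr = -2\pi B \int_{0}^{R} r q(r)^{2}\,dr = -B \int_{D_R} q^{2}\,dx.
\]

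Therefore, for every $q \in H_0^{1,\mathrm{rad}}(D_R)$,
\[
\frac{\int_{D_R} |(-i\nabla-\alpha)q(|x|)|^{2}\,dx}{\int_{D_R} q(|x|)^{2}\,dx} = B + \frac{2\pi\int_{0}^{R}\bigl(q'(r)+\tfrac{Br}{2}q(r)\bigr)^{2}r\,dr}{2\pi\int_{0}^{R} q(r)^{2}r\,dr}.
\]
Taking the infimum over $q \in H_0^{1,\mathrm{rad}}(D_R)$ and applying Theorem \ref{thm:Symmetry} on the left, together with the definition of $\mathfrak{e}(Br/2)$ in \eqref{eq:disk energy} on the right, yields exactly the claimed identity. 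There is no serious obstacle here; the only points demanding care are the vanishing of the cross term $\mathrm{Re}(i\nabla q\cdot \alpha q)$ for real radial $q$, and the boundary/origin terms in the integration by parts, both of which are immediate.
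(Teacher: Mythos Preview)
Your proof is correct and follows essentially the same route as the paper: restrict to radial functions via Theorem \ref{thm:Symmetry}, compute the magnetic energy in polar coordinates, and use one integration by parts to pass from $q'^2+\tfrac{B^2r^2}{4}q^2$ to the completed square $(q'+\tfrac{Br}{2}q)^2$ plus the constant $B$. The only cosmetic difference is that the paper carries out the same calculation for a general radial potential $\tilde\alpha=\frac{a(|x|)}{|x|}(-x_2,x_1)$, obtaining the extra term $\int_{D_R}\mathrm{rot}(\tilde\alpha)\,q^2$, and then specializes to $a(r)=\tfrac{Br}{2}$; this broader version is what is cited after Proposition \ref{prop:rearrangement}, but for the lemma itself your direct argument is equivalent.
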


\begin{proof}
First we consider a broader class of vector potentials $\tilde{\alpha}(x):=\frac{a\left(\left|x\right|\right)}{\left|x\right|}\left(-x_{2},x_{1}\right)$
on the disk, with $a\left(\left|x\right|\right)$ bounded. These correspond
to radially symmetric but possibly inhomogeneous magnetic fields that
show up in the rearrangement inequality. Written in polar coordinates,
$\tilde{\alpha}(r,\theta)=a(r)\left(-\sin\theta,\cos\theta\right)$
and for $f\in H_{0}^{1}(D_{R})$
\[
\int_{D_{R}}\left|\left(-i\nabla-\tilde{\alpha}\right)f\right|^{2}dx=\int_{0}^{R}\int_{0}^{2\pi}\left(\vert\partial_{r}f\vert^{2}+\vert\frac{i}{r}\partial_{\theta}f+af\vert^{2}\right)rd\theta dr.
\]
Thus for any $q\in H_{0}^{1,\text{rad}}(D_{R})$, 
\begin{align*}
\int_{D_{R}}\left|\left(-i\nabla-\tilde{\alpha}\right)q(\left|x\right|)\right|^{2}dx & =2\pi\int_{0}^{R}\left((q'(r)^{2}+\left(a(r)q(r)\right)^{2}\right)rdr\\
 & =2\pi\int_{0}^{R}\left(q'(r)+a(r)q(r)\right)^{2}rdr-2\pi\int_{0}^{R}\left(q^{2}\right)^{\prime}a(r)rdr,
\end{align*}
and after integrating by parts 
\begin{align*}
\int_{D_{R}}\left|\left(-i\nabla-\tilde{\alpha}\right)q(\left|x\right|)\right|^{2}dx & =2\pi\int_{0}^{R}\left(q'(r)+a(r)q(r)\right)^{2}rdr+2\pi\int_{0}^{R}q^{2}\left(a(r)r\right)^{\prime}dr\\
 & =2\pi\int_{0}^{R}\left(q'(r)+a(r)q(r)\right)^{2}rdr+\int_{D_{R}}\text{rot}\left(\tilde{\alpha}\right)q(\left|x\right|)^{2}dx.
\end{align*}

Returning to the original potential $\alpha=\frac{B}{2}\left(-x_{2},x_{1}\right)$,
the lemma follows from Theorem \ref{thm:Symmetry}, the above calculation
and that $\text{rot}\left(\alpha\right)=B$. 
\end{proof}
\noindent Moreover, Erdős proved the following estimates. See Proposition
A.1 in \cite{key-79}. 
\begin{lem}
\label{lem:bounds}There are universal constants $C_{1},C_{2}$ such
that
\[
B+\frac{C_{1}}{R^{2}}e^{-\frac{3}{4}BR^{2}}\leq\lambda\left(B,D_{R}\right)\leq B+C_{2}B\left(\frac{1}{BR^{2}}+BR^{2}\right)e^{-\frac{1}{8}BR^{2}}.
\]
\end{lem}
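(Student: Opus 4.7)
\textbf{Plan for the proof of Lemma~\ref{lem:bounds}.}
The starting point is Lemma~\ref{lemma.disk}, which already reduces the problem to estimating
\[
\mathfrak{e}(Br/2) = \inf_{q \in H_0^{1,\text{rad}}(D_R)} \frac{2\pi\int_0^R (q'(r) + \tfrac{Br}{2} q(r))^2 \, r\, dr}{2\pi\int_0^R q(r)^2 \, r\, dr}
\]
from above and below.  As in the proof of Lemma~\ref{lem:quantitative comparision}, the substitution $q(r) = p(r) e^{-Br^2/4}$ (so that $u_a(r) = e^{-Br^2/2}$ plays the role of the exponential weight) clears the cross-term and rewrites the quotient as
\[
\mathfrak{e}(Br/2) = \inf_{p \in H_0^{1,\text{rad}}(D_R)} \frac{\int_0^R (p'(r))^2 \, e^{-Br^2/2} r\, dr}{\int_0^R p(r)^2 \, e^{-Br^2/2} r\, dr},
\]
so the problem becomes a weighted one-dimensional Rayleigh quotient with Gaussian weight.

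For the \emph{upper bound}, I would test against the cutoff $p(r) = \min\{1,\, 2(R-r)/R\}$, equal to $1$ on $[0,R/2]$ and affinely decreasing to $0$ on $[R/2,R]$.  The numerator is supported on $[R/2,R]$, where $e^{-Br^2/2} \leq e^{-BR^2/8}$, and is bounded by a universal constant times $e^{-BR^2/8}$; the denominator is bounded below by $\int_0^{R/2} e^{-Br^2/2} r\, dr = B^{-1}(1 - e^{-BR^2/8})$.  Dividing produces an upper bound of order $B \, e^{-BR^2/8}/(1 - e^{-BR^2/8})$, which already matches the large-field contribution $C_2 B \cdot BR^2 \cdot e^{-BR^2/8}$ in the claim.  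In the small-field regime $BR^2 \lesssim 1$, I instead test against $p(r) = R - r$, whose Rayleigh quotient reproduces the $\sim 1/R^2$ Dirichlet-Laplacian scaling captured by the term $C_2 B \cdot (BR^2)^{-1} e^{-BR^2/8}$.

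For the \emph{lower bound}, the strategy is a weighted Hardy-type inequality.  For any admissible $p$ with $p(R) = 0$, write $p(r) = -\int_r^R p'(s)\, ds$ and apply Cauchy--Schwarz with weight $e^{-Bs^2/2} s$:
\[
p(r)^2 \leq \biggl(\int_r^R \frac{e^{Bs^2/2}}{s}\, ds \biggr) \int_0^R (p'(s))^2 e^{-Bs^2/2} s\, ds.
\]
Multiplying by $e^{-Br^2/2} r$, integrating in $r$, and applying Fubini collapses the prefactor to $B^{-1} \int_0^R (e^{Bs^2/2} - 1) s^{-1} ds$.  A change of variables $u = Bs^2/2$ turns this into $\frac{1}{2B}\int_0^{BR^2/2} (e^u-1) u^{-1} du$, which for large $BR^2$ is controlled by $C R^{-2} e^{(3/4)BR^2}$ after loosely absorbing polynomial prefactors into the exponential, and for small $BR^2$ is $O(R^2)$.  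Inverting, this yields $\mathfrak{e}(Br/2) \geq C_1 R^{-2} e^{-(3/4)BR^2}$, which reproduces the correct $1/R^2$ behavior as $B \to 0$ (the first Dirichlet eigenvalue on $D_R$).

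The \emph{main obstacle} is not conceptual but lies in reproducing the precise exponents $3/4$ and $1/8$ in Erdős' bounds.  The Hardy inequality above naturally delivers an exponent strictly smaller than $1/2$ in the lower bound, so matching $3/4$ comes essentially free by slackening the final integral estimate; the cutoff test function with break-point at $R/2$ is the simplest choice that delivers $1/8$ in the upper bound, but these constants could be optimized further.  None of these steps should obstruct the plan, which is entirely within the framework of Rayleigh-quotient estimates on the disk and makes no use of rearrangements.
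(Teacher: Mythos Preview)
The paper does not actually prove Lemma~\ref{lem:bounds}: it simply records the estimate and refers the reader to Proposition~A.1 in Erd\H{o}s' original paper \cite{key-79}. So there is no ``paper's own proof'' to compare against; your plan stands on its own, and it is sound.

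Your reduction via Lemma~\ref{lemma.disk} and the substitution $q=pe^{-Br^2/4}$ to a Gaussian-weighted Rayleigh quotient is exactly the right framework. The cutoff test function with break-point $R/2$ gives the upper bound with the exponent $1/8$ as you say, and the linear test function handles the small-field regime. For the lower bound, the Cauchy--Schwarz/Hardy argument is correct; after Fubini the prefactor is indeed $\frac{1}{2B}\int_0^{BR^2/2}(e^u-1)u^{-1}\,du$, which for large $BR^2$ behaves like $e^{BR^2/2}/(B^2R^2)$ and for small $BR^2$ like $R^2/4$. Inverting gives $\mathfrak{e}(Br/2)\gtrsim R^{-2}e^{-BR^2/2}$ uniformly (up to harmless polynomial factors in $BR^2$), which is \emph{stronger} than the claimed $R^{-2}e^{-3BR^2/4}$, so the stated exponent $3/4$ follows by slackening.

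Two small corrections to your write-up: first, where you say the prefactor ``is controlled by $CR^{-2}e^{(3/4)BR^2}$'', this should read $CR^{2}e^{(3/4)BR^2}$ (otherwise the inversion would give the wrong power of $R$); second, the Hardy step delivers the exponent $1/2$ exactly, not ``strictly smaller than $1/2$''---but since $1/2<3/4$ this only helps. Neither point affects the validity of the plan.
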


\noindent Improving these estimates is an ongoing area of research
\cite{key-9},\cite{key-10},\cite{key-11} \& ref. therein. In the
absence of a magnetic field, $\lambda(0,D_{R})=j_{0,1}^{2}R^{-2}$
where $j_{0,1}\approx2.4048$ is the first zero of the Bessel function
of order zero.

\section{\label{sec:Asymmetry of Large Subsets}Asymmetry of Large Subsets}

\noindent If a subset is large enough, its asymmetry is comparable
to the asymmetry of the whole domain \cite{key-3},\cite{key-4}. 
\begin{lem}
\label{lem:property of asymmetry}Let $U\subseteq\Omega$ with $\left|U\right|=\pi r^{2}$
and $\left|\Omega\right|=\pi R^{2}$. If $\left|U\right|\geq\left|\Omega\right|\left(1-\frac{1}{2}\mathcal{A}\left(\Omega\right)\right)$,
then $r\mathcal{A}\left(U\right)\geq\frac{1}{2}R\mathcal{A}\left(\Omega\right)$. 
\end{lem}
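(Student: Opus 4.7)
The plan is to handle the two asymmetries separately, both starting from the same two easy consequences of the hypothesis $|U|\geq|\Omega|(1-\tfrac12\mathcal{A}(\Omega))$: first, $|\Omega|-|U|\leq\tfrac12|\Omega|\mathcal{A}(\Omega)$; and second, writing $R^2-r^2\leq\tfrac12 R^2\mathcal{A}(\Omega)$ and dividing by $R+r\geq R$, the geometric consequence $R-r\leq\tfrac12 R\mathcal{A}(\Omega)$.

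For the Fraenkel asymmetry, I would fix $x_0\in\mathbb{R}^2$ (nearly) realizing the infimum in $\mathcal{A}_F(U)$, so that $|U\,\Delta\,(x_0+D_r)|=2|U|\mathcal{A}_F(U)$, and compare $\Omega$ with the \emph{concentric} disk $x_0+D_R$. Using that $D_r\subseteq D_R$ and decomposing the symmetric difference, one gets
\[
|\Omega\,\Delta\,(x_0+D_R)|\;\leq\;|\Omega\setminus U|\;+\;|U\,\Delta\,(x_0+D_r)|\;+\;|(x_0+D_R)\setminus(x_0+D_r)|.
\]
The first and third terms each equal $|\Omega|-|U|$, while the left-hand side is bounded below by $2|\Omega|\mathcal{A}_F(\Omega)$. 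Invoking the hypothesis rearranges this to $\tfrac12|\Omega|\mathcal{A}_F(\Omega)\leq|U|\mathcal{A}_F(U)$, i.e. $R^2\mathcal{A}_F(\Omega)\leq 2r^2\mathcal{A}_F(U)$, whence $r\mathcal{A}_F(U)\geq\tfrac{R^2}{2r}\mathcal{A}_F(\Omega)\geq\tfrac12 R\mathcal{A}_F(\Omega)$ since $r\leq R$.

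For the interior asymmetry the argument is just two lines. Any ball contained in $U$ is also contained in $\Omega$, so $\rho_-(U)\leq\rho_-(\Omega)$. Therefore
\[
r\mathcal{A}_I(U)\;=\;r-\rho_-(U)\;\geq\;r-\rho_-(\Omega)\;=\;R\mathcal{A}_I(\Omega)-(R-r)\;\geq\;\tfrac12 R\mathcal{A}_I(\Omega),
\]
using the bound $R-r\leq\tfrac12 R\mathcal{A}_I(\Omega)$ noted above. If $U$ is not simply connected, the paper's convention (see the footnote) instructs us to replace $U$ by its smallest simply connected superset $\widetilde{U}$, which is still contained in the simply connected $\Omega$, and the same two-line argument applies to $\widetilde{U}$.

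There is no genuinely hard step; the only care required is the bookkeeping in the Fraenkel case. One must compare \emph{concentric} disks of radii $r$ and $R$ (not two independently optimized disks), because that is what makes the third term in the triangle inequality telescope to the area difference $|\Omega|-|U|$ and lets the hypothesis absorb the two "error" terms into a single factor of $\tfrac12$. Once that is set up correctly, both cases collapse to one-line arithmetic.
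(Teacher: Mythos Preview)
Your proof is correct and follows essentially the same route as the paper's. Both arguments use $\rho_-(U)\le\rho_-(\Omega)$ together with $R-r\le\tfrac12 R\mathcal{A}_I(\Omega)$ for the interior asymmetry, and for the Fraenkel asymmetry both compare $\Omega$ and $U$ against \emph{concentric} disks to obtain $|D_\Omega\triangle\Omega|\le|D_U\triangle U|+2(|\Omega|-|U|)$, then absorb the two area-defect terms with the hypothesis; the only cosmetic difference is that you fix a near-optimal center for $U$ at the outset, whereas the paper carries an arbitrary center and takes the infimum at the end.
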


\begin{proof}
First we consider the interior asymmetry. From our assumption on the
area of $U$, we have $\left|U\right|\geq\left|\Omega\right|\left(1-\frac{1}{2}\mathcal{A}_{I}\left(\Omega\right)\right)^{2}$
and thus $r\geq R\left(1-\frac{1}{2}\mathcal{A}_{I}\left(\Omega\right)\right)$.
We then deduce that $r-\rho_{-}(U)\geq r-\rho_{-}(\Omega)\geq\frac{1}{2}\left(R-\rho_{-}(\Omega)\right)$,
which yields the lemma. 

Now we turn to the Fraenkel asymmetry. Let $D_{U}$ and $D_{\Omega}$
denote two concentric balls such that $\left|D_{U}\right|=\left|U\right|$
and $\left|D_{\Omega}\right|=\left|\Omega\right|$. Then, $\left|D_{\Omega}\triangle\Omega\right|\leq\left|D_{U}\triangle U\right|+2\left(\left|\Omega\right|-\left|U\right|\right).$
Using this inequality and our assumption on the area of $U$, we deduce
\[
\frac{\vert D_{U}\Delta U\vert}{2\vert U\vert}\geq\frac{\vert D_{\Omega}\Delta\Omega\vert}{2\vert U\vert}-\frac{\vert\Omega\vert-\vert U\vert}{\vert U\vert}\geq\frac{1}{2}\mathcal{A}_{F}(\Omega)\frac{\vert\Omega\vert}{\vert U\vert}\geq\frac{1}{2}\frac{R}{r}\mathcal{A}_{F}(\Omega).
\]
Taking the infimum over all translations of $D_{U}$ concludes the
proof. 
\end{proof}

\end{document}